
\documentclass{amsart}

\usepackage{amssymb, amscd, amsthm, mathrsfs, bm}

\usepackage{color}
\usepackage[colorlinks=true]{hyperref}

\newtheorem{Thm}{Theorem}[section]
\newtheorem{Prop}[Thm]{Proposition}

\newtheorem{Lem}[Thm]{Lemma}

\newtheorem{Conj}[Thm]{Conjecture}
\theoremstyle{remark}

\newtheorem*{Notation}{Notation}
\newtheorem*{Ack}{Acknowledgments}

\numberwithin{equation}{section}

\newcommand{\Order}{\mathcal{O}}
\newcommand{\into}{\hookrightarrow}
\newcommand{\intoback}{\hookleftarrow}
\newcommand{\onto}{\twoheadrightarrow}
\newcommand{\isomto}{\overset{\sim}{\to}}

\newcommand{\compose}{\mathbin{\circ}}
\newcommand{\tensor}{\mathbin{\otimes}}

\newcommand{\Z}{\mathbb{Z}}
\newcommand{\Q}{\mathbb{Q}}

\newcommand{\F}{\mathbb{F}}
\newcommand{\et}{\mathrm{et}}

\newcommand{\fppf}{\mathrm{fppf}}

\newcommand{\id}{\mathrm{id}}
\newcommand{\Gm}{\mathbf{G}_{m}}
\newcommand{\Ga}{\mathbf{G}_{a}}

\newcommand{\invlim}{\varprojlim}

\mathchardef\mhyphen="2D

\newcommand{\alg}[1]{\mathbf{#1}}
\newcommand{\var}{\;\cdot\;}

\newcommand{\ideal}[1]{\mathfrak{#1}}

\newcommand{\Frob}{\mathrm{Fr}}

\DeclareMathOperator{\Ker}{Ker}

\DeclareMathOperator{\Spec}{Spec}
\DeclareMathOperator{\Ab}{Ab}

\DeclareMathOperator{\Pic}{Pic}

\DeclareMathOperator{\Lie}{Lie}

\let\Im\relax

\DeclareMathOperator{\Im}{Im}

\DeclareMathOperator{\gr}{gr}

\minCDarrowwidth1.5pc 

\hyphenation{Gro-then-dieck}

\title[Finite generation for punctured spectra]
	{Finite generation of nilpotent quotients of fundamental groups of punctured spectra}
\author{Takashi Suzuki}
\address{
	Department of Mathematics, Chuo University,
	1-13-27 Kasuga, Bunkyo-ku, Tokyo 112-8551, Japan
}
\email{tsuzuki@gug.math.chuo-u.ac.jp}
\date{July 4, 2023}
\subjclass[2010]{Primary: 14B05; Secondary: 14F30, 11G25, 14G20, 14J17}
\keywords{Local fundamental groups; singularities; $p$-adic nearby cycles}


\begin{document}

\begin{abstract}
	In SGA 2, Grothendieck conjectures that
	the \'etale fundamental group of the punctured spectrum of a complete noetherian local domain
	of dimension at least two
	with algebraically closed residue field is topologically finitely generated.
	In this paper, we prove a weaker statement,
	namely that the maximal pro-nilpotent quotient of the fundamental group is topologically finitely generated.
	The proof uses $p$-adic nearby cycles and negative definiteness of intersection pairings
	over resolutions of singularities
	as well as some analysis of Lie algebras of certain algebraic group structures on deformation cohomology.
\end{abstract}

\maketitle

\tableofcontents


\section{Introduction}
\label{0043}

In SGA 2 \cite{Gro05}, Grothendieck makes the following conjecture:

\begin{Conj}[{\cite[Expos\'e XIII, Conjecture 3.1]{Gro05}}] \label{0017}
	Let $A$ be a complete noetherian local ring with algebraically closed residue field $F$
	and maximal ideal $\ideal{m}$.
	Let $p$ be the characteristic of $F$ if it is positive
	and let $p = 1$ otherwise.
	Assume that the irreducible components of $\Spec A$ have dimension $\ge 2$
	and the scheme $\Spec A \setminus \{\ideal{m}\}$ is connected.
	Then:
	\begin{enumerate}
		\item \label{0041}
			The \'etale fundamental group $\pi_{1}(\Spec A \setminus \{\ideal{m}\})$ is
			topologically finitely generated.
		\item \label{0042}
			The maximal pro-prime-to-$p$ quotient of $\pi_{1}(\Spec A \setminus \{\ideal{m}\})$
			is topologically finitely presented.
	\end{enumerate}
\end{Conj}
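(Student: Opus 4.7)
The plan is to attack parts (1) and (2) by reducing to geometric finiteness statements on a desingularization of $\Spec A$ and then handling the prime-to-$p$, pro-$p$, and non-abelian simple quotients separately. After standard devissage to the case that $A$ is a complete noetherian normal local domain of dimension $d \ge 2$, I would apply de Jong's theorem on alterations to produce a proper generically finite surjective morphism $\pi \colon Y \to \Spec A$ with $Y$ regular and with exceptional locus $E := \pi^{-1}(\ideal{m})_{\red}$ a strict normal crossings divisor. Setting $U := \Spec A \setminus \{\ideal{m}\}$ and $V := Y \setminus E$, a Zariski--Nagata purity argument together with passage to Galois closure should imply that the induced $\pi_{1}(V) \to \pi_{1}(U)$ has open image; since both topological finite generation and topological finite presentation transfer across open finite-index subgroups, matters reduce to the corresponding statements for $\pi_{1}(V)$.

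For part (2) and the prime-to-$p$ half of (1), I would analyze the tame fundamental group $\pi_{1}^{t}(V)$ via a Grothendieck--Murre-type exact sequence in which inertia around the finitely many irreducible components of $E$ generates an open normal subgroup, with quotient the fundamental group of a specialization of the closed fiber. The closed fiber is a projective scheme over the algebraically closed residue field $F$, hence has topologically finitely presented étale fundamental group by SGA 1 XIII. Abhyankar's lemma then supplies the missing relations, yielding topological finite presentation of $\pi_{1}^{(p')}(U)$; this gives part (2), and simultaneously the prime-to-$p$ half of (1).

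For the pro-$p$ half of (1), the Burnside basis theorem for pro-$p$ groups reduces topological finite generation of $\pi_{1}^{(p)}(U)$ to finiteness of $H^{1}_{\et}(U, \F_{p})$. I would compute this via the Leray spectral sequence for $\pi$ with $\F_{p}$-coefficients, identifying it with cohomology on $E$ valued in the $p$-adic nearby cycles complex $\psi_{p}$. Negative definiteness of the intersection pairing on $E$ (Mumford) bounds the combinatorial contribution, while the algebraic-group structures on deformation cohomology announced in the abstract supply finite-dimensional Lie-algebra incarnations of the remaining wild contributions, forcing finiteness of the total group.

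The main obstacle lies in the non-abelian quotients. Topological finite generation of $\pi_{1}^{(p)}(U)$ and of $\pi_{1}^{(p')}(U)$ does not, on its own, imply topological finite generation of $\pi_{1}(U)$: a finite non-abelian simple group of order divisible by $p$ can occur as a quotient of $\pi_{1}(U)$ while being a quotient of neither factor. To complete part (1) one must bound, for every finite simple group $S$, the pointed non-abelian cohomology set $H^{1}_{\et}(U, S)$, equivalently the number of continuous surjections $\pi_{1}(U) \onto S$. This twisted version of the analysis in the previous paragraph --- controlling $p$-adic nearby cycles with non-constant coefficients while keeping tame bounds uniform across the associated torsors --- is where I expect the bulk of the new technical difficulty to lie, and is the step that most seriously resists the strategy sketched above.
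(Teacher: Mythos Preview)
The statement you are attempting to prove is a \emph{conjecture}, and the paper does not prove it. The paper's contribution is the strictly weaker Theorem~\ref{0018}: only the maximal pro-\emph{nilpotent} quotient of $\pi_{1}(\Spec A \setminus \{\ideal{m}\})$ is shown to be topologically finitely generated. You yourself locate the gap precisely in your final paragraph: controlling the non-abelian simple quotients of order divisible by $p$ is exactly the missing step, and nothing in the paper (or in your outline) closes it. So there is no ``paper's own proof'' of Conjecture~\ref{0017} to compare against; part~\eqref{0041} remains open.

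Restricting attention to what the paper \emph{does} prove, your pro-$p$ sketch is in the right spirit but diverges in the reductions. The paper first reduces (following the remark after \cite[XIII, 3.1]{Gro05}) to the case where $A$ is \emph{two-dimensional} and normal, and then uses a genuine resolution of singularities $\mathfrak{X}\to\Spec A$ (available in dimension two by Lipman) rather than an alteration. This sidesteps your appeal to ``$\pi_{1}(V)\to\pi_{1}(U)$ has open image via Zariski--Nagata purity,'' which is not justified as stated: an alteration is only generically \'etale, and purity does not by itself give openness of the image. After the reduction, the paper's argument is indeed: finiteness of $H^{1}(X,\Z/p\Z)$ via proper base change to the exceptional curve, $p$-adic nearby cycles, and the negative definiteness of the intersection matrix. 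The algebraic-group/Lie-algebra machinery you allude to is invoked only in the mixed characteristic case; in equal characteristic~$p$ the argument is a more direct Artin--Schreier computation (Section~\ref{0035}).

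For part~\eqref{0042}, the paper does not address it at all beyond citing Grothendieck--Murre \cite{GM71} for the two-dimensional case; your Abhyankar/tame-inertia outline is plausible but is not something the paper undertakes or confirms.
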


This conjecture is in part based on
Mumford's earlier study \cite{Mum61} in the complex-analytic setting,
where a topological analogue of $\pi_{1}(\Spec A \setminus \{\ideal{m}\})$
is shown to be finitely presented
when the exceptional divisor of a resolution of singularities of $A$ is simply connected
(see \cite[Section I, pp.11--12]{Mum61}).
Statement \eqref{0042} is proved by Grothendieck-Murre \cite[Theorem 9.2]{GM71}
when $A$ is two-dimensional.
Our focus in this paper is the pro-$p$ part and hence Statement \eqref{0041}.

Grothendieck originally stated \eqref{0041}
in his letter to Serre dated October 1, 1961 \cite{CS01},
where Serre's editorial note (in 2000) reads:
``I do not know whether any progress has been made on it since.''
Known results indeed seem quite scarce:
the only one the author could find (at the time this paper was submitted)
is the work of Carvajal-Rojas-Schwede-Tucker \cite{CRST18},
which proves that
$\pi_{1}(\Spec A \setminus \{\ideal{m}\})$ is finite of order prime to $p$
for the case where $A$ is a strongly $F$-regular singularity in equal characteristic $p > 0$.
(But see below about the work of Hartshorne-Speiser \cite{HS77}.)
On the other hand, wild quotient singularities constructed by Artin \cite{Art75},
Lorenzini \cite[Corollary 6.14]{Lor14} and others
give concrete examples with $\pi_{1}(\Spec A \setminus \{\ideal{m}\}) \cong \Z / p \Z$.

In this paper, we prove a weaker version of Statement \eqref{0041}
with no additional assumption on $A$:

\begin{Thm} \label{0018}
	Under the assumptions of Conjecture \ref{0017},
	the maximal pro-nilpotent quotient of $\pi_{1}(\Spec A \setminus \{\ideal{m}\})$ is
	topologically finitely generated.
\end{Thm}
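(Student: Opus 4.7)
The maximal pro-nilpotent quotient $\Pi$ of $\pi_{1}(U)$, with $U=\Spec A\setminus\{\ideal{m}\}$, decomposes as the product of its pro-$\ell$ Sylow factors $\Pi_{\ell}$; by the Burnside basis theorem each requires $d(\Pi_{\ell})=\dim_{\F_{\ell}}\Pi_{\ell}^{\mathrm{ab}}/\ell$ topological generators. Hence $\Pi$ is topologically finitely generated if and only if $\pi_{1}(U)^{\mathrm{ab}}$ is, and dually this amounts to showing that $H^{1}_{\et}(U,\Z/\ell\Z)$ is finite with $\F_{\ell}$-dimension bounded uniformly in $\ell$, plus finite generation of each pro-$\ell$ component of $\pi_{1}(U)^{\mathrm{ab}}$ as a $\Z_{\ell}$-module.

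\textbf{Resolution and the tame part.} Choose a proper birational modification $\pi\colon X\to\Spec A$ (a resolution, or an alteration after standard reductions) that is an isomorphism over $U$, with reduced exceptional fiber $E=\pi^{-1}(\ideal{m})_{\red}$, and pass to the formal completion $\hat{X}$ of $X$ along $E$; the punctured version $\hat{X}\setminus E$ has the same \'etale fundamental group as $U$. For $\ell\neq p$, cohomological purity reduces $H^{1}(\hat{X}\setminus E,\Z/\ell\Z)$ to the cohomology of twisted constant sheaves on the components of $E$, and $E$ is a proper $F$-scheme whose Betti numbers are finite, uniformly bounded, and have no $\ell$-torsion for almost all $\ell$. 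This handles the prime-to-$p$ part.

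\textbf{The pro-$p$ part.} The remaining task is to bound $H^{1}(\hat{X}\setminus E,\Z/p^{n}\Z)$. The plan is to apply $p$-adic nearby cycles on $X$ --- logarithmic de Rham--Witt sheaves $W_{n}\Omega_{\log}^{\bullet}$ in equal characteristic $p$, and syntomic or prismatic nearby cycles in mixed characteristic --- to express this cohomology in terms of cohomology of sheaves of differentials on $E$, together with deformation-cohomology terms classifying infinitesimal thickenings of $E$ inside $X$. The raw $F$-vector spaces appearing are infinite dimensional, so finite generation must come from additional structure: I would endow the relevant cohomology groups with commutative pro-algebraic (or quasi-algebraic) group structures over $F$ and bound their Lie algebras. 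These Lie algebras are controlled by maps induced by the intersection pairing between components of $E$; Zariski--Mumford negative definiteness of that pairing forces those maps to have finite-dimensional kernels and cokernels, yielding the required uniform bound in $n$ (and hence in $p$).

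\textbf{Main obstacle.} The principal difficulty is this last step: matching the abstract nearby-cycle cohomology with an explicit pro-algebraic group structure on the $F$-side, and then translating the numerical negative-definiteness statement into a genuine finite-dimensionality statement at the level of Lie algebras. In mixed characteristic this is further complicated by the subtler shape of the nearby-cycle complex and by the non-trivial appearance of deformation cohomology. I expect the bulk of the work to lie in setting up the algebraic-group and Lie-algebra formalism so that negative definiteness can be brought to bear.
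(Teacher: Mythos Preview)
Your outline is strategically aligned with the paper: reduce to finite generation of the abelianization, cite Grothendieck--Murre for the prime-to-$p$ part, and for the $p$-part pass to a resolution, use nearby cycles along the exceptional divisor, endow the resulting deformation-type cohomology with algebraic group structures over $F$, and invoke negative definiteness of the intersection matrix to control Lie algebras. You have also correctly located the crux in the mixed characteristic Lie-algebra step.

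Two points where the paper's execution is simpler than what you propose. First, only $n=1$ is needed: once $H^{1}_{\et}(U,\Z/p\Z)$ is finite, Burnside gives finite generation of the pro-$p$ abelianization, so there is no need for uniform bounds in $n$ (and the prime-to-$p$ uniformity is already in Grothendieck--Murre). Second, the paper uses lighter sheaves than you suggest: in equal characteristic it works directly with the Artin--Schreier sequence on $\Order_{X}$ (no logarithmic de Rham--Witt), and in mixed characteristic it uses the Kummer-type complex $\mathfrak{T}(1)=[\Gm\xrightarrow{p}\Gm]$ rather than syntomic or prismatic machinery. The nearby cycle $R^{1}\Psi\mathfrak{T}(1)$ is filtered by principal unit groups $1+I_{Y}^{m}I_{Z}^{m'}$, and the $p$-th power map on these is essentially ``Frobenius minus identity'' on the associated gradeds; negative definiteness enters via Proposition~\ref{0028}/\ref{0030} to make the ``identity'' piece injective on $H^{1}$. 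The genuinely delicate point, which your last paragraph anticipates, is that in mixed characteristic the naive algebraic group structure has spurious infinitesimal thickenings; the paper kills an explicit $\alpha_{p}$-subsheaf before the Lie algebra map becomes the honest injection $(p,\Frob)$ it should be.
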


In particular, the maximal abelian quotient and
the maximal pro-$p$ quotient are both topologically finitely generated.

This theorem has an application to the author's work \cite{Suz24} on arithmetic duality for $A$
when $A$ is normal and two-dimensional and has mixed characteristic.
In this work, the ``arithmetic cohomology'' $H^{q}(\Spec A \setminus \{\ideal{m}\}, \Z / p^{n} \Z(r))$
is given a canonical structure as an ind-pro-algebraic group over the residue field.
Using Theorem \ref{0018} above, we can show (done in the subsequent work \cite{SuzExpl})
that this ind-pro-algebraic group structure
actually has no connected part (that is, it is an \'etale group) when $q = 1$ and $r = 0$.
For other values of $q$ and $r$, the connected part of this arithmetic cohomology may be non-trivial.
For example, for $q = 1$ and $r = 1$, it is closely related to
Lipman's group scheme structure \cite{Lip76}
on the Picard group of a resolution of the singularity of $A$.
Thus the \'etaleness of the algebraic structure in the case $q = 1$ and $r = 0$
is a non-trivial finiteness statement.

To prove the theorem, we may assume that $A$ is normal and two-dimensional
by the same argument as the line after
\cite[Expos\'e XIII, Conjecture 3.1]{Gro05}
(using the maximal pro-nilpotent quotient of $\pi_{1}$ in place of full $\pi_{1}$).
It is enough to show that
the maximal abelian quotient of $\pi_{1}(\Spec A \setminus \{\ideal{m}\})$ is
topologically finitely generated.%
\footnote{
	Use the fact that if a finite set of elements of a pro-nilpotent group topologically generates the abelian quotient,
	then the same set topologically generates the whole group
	(\cite[Section 5.8, Lemma 5.9]{MKS04}).
}
The prime-to-$p$ part is done by Grothendieck-Murre.
Thus all we need to do is to show the finiteness of the \'etale cohomology
	\begin{equation} \label{0048}
		H^{1}(\Spec A \setminus \{\ideal{m}\}, \Z / p \Z).
	\end{equation}

The strategy is to take a resolution of singularities of $A$ and describe the $H^{1}$
by $p$-adic nearby cycles around the exceptional divisor.
These $p$-adic nearby cycles in turn are described by coherent cohomology.
The negative definiteness of intersection matrices of exceptional divisors
supplies basic bounds on the coherent cohomology.
Additionally, we need to analyze a kind of Frobenius-fixed points of some ``deformation'' cohomology.
This part is more involved in the mixed characteristic case
than in the equal characteristic case.
For this analysis, we introduce algebraic group structures on the deformation cohomology
and use their Lie algebras.

When this paper was submitted,
one of the referees pointed out that
the equal characteristic case of Theorem \ref{0018} had essentially been
obtained by Hartshorne-Speiser \cite{HS77}.
Indeed, the combination of \cite[Corollary 5.5, Theorem 5.4 and Section 2, Remark (5)]{HS77}
shows that the cohomology \eqref{0048} is finite in this case.
Their method of proof is to study general finiteness problems on Frobenius modules
and apply them to Frobenius actions on local cohomology modules.
In contrast, our proof in the the equal characteristic case relies much less on
general finiteness problems on Frobenius modules.
Our proof is also short and gives a model for the proof in the mixed characteristic case.
Therefore we believe it has its own merits and so we keep it in the original form.

This paper is organized as follows.
After some preliminaries in Section \ref{0034},
the equal characteristic $p > 0$ case is treated in Section \ref{0035}.
The rest of the paper treats the mixed characteristic case.
In Section \ref{0036},
we describe $H^{1}(\Spec A \setminus \{\ideal{m}\}, \Z / p \Z)$
by $p$-adic nearby cycles and relate it to some coherent cohomology and ``deformation cohomology''.
In Sections \ref{0010} and \ref{0037},
we give some algebraic group structures on the deformation cohomology,
calculate their Lie algebras
and show that a natural map between them is injective.
This is enough to conclude that
$H^{1}(\Spec A \setminus \{\ideal{m}\}, \Z / p \Z)$ is finite,
thus finishing the proof of Theorem \ref{0018}.

\begin{Ack}
	The author thanks the referees for the careful reading,
	especially for pointing to the work of Hartshorne-Speiser
	and to an error in arguments about intersection pairings
	in an earlier manuscript of this paper.
\end{Ack}

\begin{Notation}
	Let $A$ be a two-dimensional complete noetherian normal local ring
	with algebraically closed residue field $F$.
	Assume that $F$ has characteristic $p > 0$.
	Set $\Lambda = \Z / p \Z$.
	Let $\ideal{m}$ be the maximal ideal of $A$
	and set $X = \Spec A \setminus \{\ideal{m}\}$.
	Let $P$ be the set of height one prime ideals of $A$.
	
	Let $\mathfrak{X} \to \Spec A$ be a resolution of singularities
	such that the reduced part $Y$ of $\mathfrak{X} \times_{A} F$ is
	supported on a strict normal crossing divisor (\cite[Tag 0BIC]{Sta22}).
	Let $Y_{1}, \dots, Y_{n}$ be the irreducible components of $Y$.
	Let $I_{Y}, I_{Y_{1}}, \dots, I_{Y_{n}} \subset \Order_{\mathfrak{X}}$ be
	the ideal sheaves of $Y, Y_{1}, \dots, Y_{n}$.
	
	For an ordered set of integers $m = (m_{1}, \dots, m_{n}) \in \Z^{n}$,
	set $I_{Y}^{m} = \prod_{i} I_{Y_{i}}^{m_{i}}$.
	This notation is consistent with the $m$-th power of $I_{Y}$
	when $m_{1} = \dots = m_{n}$ and $m$ is identified with this common value of the $m_{i}$.
	That is, in this notation, we identify $\Z$ as the diagonal image in $\Z^{n}$.
	We view $\Z^{n}$ as a $\Z$-modules,
	so $m \pm m'$ for $m, m' \in \Z^{n}$ means component-wise addition/subtraction
	and $m + 1$ and $2 m$ for example means $(m_{1} + 1, \dots, m_{n} + 1)$
	and $(2 m_{1}, \dots, 2 m_{n})$.
	
	Let
		\[
			X \stackrel{j}{\into} \mathfrak{X} \stackrel{i}{\intoback} Y
		\]
	be the inclusions.
	For $q \in \Z$, let
		\begin{gather*}
					\Psi
				=
					i^{\ast} j_{\ast}
				\colon
					\Ab(X_{\et})
				\to
					\Ab(Y_{\et}),
			\\
					R^{q} \Psi
				=
					i^{\ast} R^{q} j_{\ast}
				\colon
					D(X_{\et})
				\to
					\Ab(Y_{\et}),
			\\
					R \Psi
				=
					i^{\ast} R j_{\ast}
				\colon
					D(X_{\et})
				\to
					D(Y_{\et})
		\end{gather*}
	be the nearby cycle functors
	for the categories of sheaves of abelian groups on the \'etale sites
	and their derived categories.
\end{Notation}


\section{Preliminaries}
\label{0034}

As discussed in Section \ref{0043},
to prove Theorem \ref{0018},
we need to show that $H^{1}(X, \Lambda)$ is finite.
By the proper base change, we have
	\[
			H^{1}(X, \Lambda)
		\cong
			H^{1}(Y, R \Psi \Lambda).
	\]
Since $H^{2}(Y, \Lambda) = 0$ by \cite[Chapter VI, Remark 1.5 (b)]{Mil80},
this induces an exact sequence
	\[
			0
		\to
			H^{1}(Y, \Lambda)
		\to
			H^{1}(X, \Lambda)
		\to
			\Gamma(Y, R^{1} \Psi \Lambda)
		\to
			0.
	\]
The group $H^{1}(Y, \Lambda)$ is finite.
Hence we are reduced to showing that
$\Gamma(Y, R^{1} \Psi \Lambda)$ is finite.

We need some preliminaries for the rest of the paper.
We will use intersection theory for exceptional curves on $\mathfrak{X}$
(\cite[Section 13]{Lip69}).
For an ordered set of integers $(m_{1}, \dots, m_{n})$,
we say that the divisor $\sum_{i} m_{i} Y_{i}$ on $\mathfrak{X}$ is \emph{nef}
(with respect to the morphism $\mathfrak{X} \to \Spec A$)
if the intersection number
$(\sum_{i} m_{i} Y_{i}) \cdot Y_{i'}$ is non-negative for all $i'$.
The negative-definiteness of intersection pairings
gives the following useful negativity properties for sheaves of the form $I_{Y}^{m'} / I_{Y}^{m}$:

\begin{Prop} \label{0049}
	Let $m = (m_{1}, \dots, m_{n})$ be
	an ordered set of non-positive integers
	such that $\sum_{i} m_{i} Y_{i}$ is nef.
	\begin{enumerate}
		\item \label{0050}
			Let $m' = (m_{1}', \dots, m_{n}')$ be an ordered set of integers
			such that $m_{i}' \le m_{i}$ for all $i$.
			Then the sheaf $I_{Y}^{m'} / I_{Y}^{m}$ admits a finite filtration
			for which every successive subquotient is supported on $Y_{i}$ for some $i$
			(which depends on the subquotient)
			giving a line bundle of negative degree on $Y_{i}$.
		\item \label{0051}
			Assume that $m_{i} \ne 0$ for any $i$.
			Then the sheaf $I_{Y}^{m} / I_{Y}^{m + 1}$ admits a finite filtration
			for which every successive subquotient is supported on $Y_{i}$ for some $i$
			giving a line bundle of negative degree on $Y_{i}$.
	\end{enumerate}
\end{Prop}

\begin{proof}
	\eqref{0050}
	There is nothing to do if $m' = m$.
	Suppose $m' \ne m$.
	For any $i'$, consider the intersection number
		\[
				\left(
					\sum_{i} (m_{i} - m_{i}') Y_{i}
				\right)
			\cdot
				Y_{i'}.
		\]
	Since $\sum_{i} (m_{i} - m_{i}') Y_{i}$ is a non-zero effective divisor,
	the negative-definiteness of the intersection matrix $(Y_{i} \cdot Y_{j})_{i j}$
	(\cite[Lemma 14.1]{Lip69}) shows that
	this number is negative for some  $i'$.
	For this $i'$, we have $m_{i'} - m_{i'}' > 0$
	since $Y_{i} \cdot Y_{j} \ge 0$ for $i \ne j$.
	Also, since $\sum_{i} m_{i} Y_{i}$ is nef,
	we have
		\begin{equation} \label{0052}
					\left(
						- \sum_{i} m_{i}' Y_{i}
					\right)
				\cdot
					Y_{i'}
			<
				0.
		\end{equation}
	Define $m'' = (m_{1}'', \dots, m_{n}'')$ by setting
	$m_{i}'' = m_{i}' + 1$ for $i = i'$
	and $m_{i}'' = m_{i}'$ otherwise.
	Then $m_{i}'' \le m_{i}$ for all $i$.
	The sheaf $I_{Y}^{m'} / I_{Y}^{m''}$ is supported on $Y_{i'}$
	giving a line bundle on $Y_{i'}$.
	Its degree is the left-hand side of \eqref{0052}
	by the definition of intersection numbers.
	Replacing $m'$ by $m''$ and doing induction,
	we get a desired filtration.
	
	\eqref{0051}
	Since $- \sum_{i} m_{i} Y_{i}$ is a non-zero effective divisor,
	the negative-definiteness of the intersection matrix shows that
		\begin{equation} \label{0053}
					\left(
						- \sum_{i} m_{i} Y_{i}
					\right)
				\cdot
					Y_{i(1)}
			<
				0
		\end{equation}
	for some $i(1)$.
	Choose $i(2) \ne i(1)$ such that $Y_{i(2)} \cap Y_{i(1)} \ne \emptyset$.
	Choose $i(3) \ne i(1), i(2)$ such that $Y_{i(3)} \cap (Y_{i(1)} \cup Y_{i(2)}) \ne \emptyset$.
	Choose $i(4) \ne i(1), i(2), i(3)$ such that $Y_{i(4)} \cap (Y_{i(1)} \cup Y_{i(2)} \cup Y_{i(3)}) \ne \emptyset$.
	Repeat until one arrives at $i(n) \ne i(1), \dots, i(n - 1)$.
	This is possible due to the connectedness of $Y$ (Zariski's main theorem).
	For $1 \le j \le n$,
	define $m(j) = (m(j)_{1}, \dots, m(j)_{n})$ by setting
		\begin{gather*}
					m(j)_{i(1)} = m_{i(1)} + 1,
				\quad \dots, \quad
					m(j)_{i(j)} = m_{i(j)} + 1,
			\\
					m(j)_{i} = m_{i}
				\text{ for }
					i \ne i(1), \dots, i(j).
		\end{gather*}
	Then $I_{Y}^{m} / I_{Y}^{m(1)}$ is supported on $Y_{i(1)}$
	giving a line bundle of negative degree by \eqref{0053}.
	Also, for $2 \le j \le n$, we have
		\begin{align*}
			&
						\left(
							- \sum_{i} m(j - 1)_{i} Y_{i}
						\right)
					\cdot
						Y_{i(j)}
			\\
			&	=
							\left(
								- \sum_{i} m_{i} Y_{i}
							\right)
						\cdot
							Y_{i(j)}
					-
						Y_{i(1)} \cdot Y_{i(j)}
					-
						\dots
					-
						Y_{i(j - 1)} \cdot Y_{i(j)}.
		\end{align*}
	The first term of the right-hand side is non-positive
	by the nefness of $\sum_{i} m_{i} Y_{i}$.
	All other terms are non-positive and at least one of them is negative
	since $Y_{i(j)} \cap (Y_{i(1)} \cup \dots \cup Y_{i(j - 1)}) \ne \emptyset$.
	Thus $I_{Y}^{m(j - 1)} / I_{Y}^{m(j)}$ is supported on $Y_{i(j)}$
	giving a line bundle of negative degree.
	Since $m(n) = m + 1$, this gives a desired filtration.
\end{proof}


\section{Equal characteristic case}
\label{0035}

In this section, we treat the equal characteristic case.
Assume that the characteristic of the fraction field of $A$ is $p$ ($> 0$).
Let $\Order_{X} = \Ga \in \Ab(X_{\et})$ be
the structure sheaf of $X$ in the \'etale topology.
Let $\Frob \colon \Order_{X} \to \Order_{X}$ be the $p$-th power map.
Since $j$ is an affine morphism,
we have $R^{q} j_{\ast} \Order_{X} = 0$ for $q \ge 1$.
Hence the Artin-Schreier sequence
	\[
		0 \to \Lambda \to \Order_{X} \stackrel{\Frob - 1}{\to} \Order_{X} \to 0
	\]
over $X_{\et}$ induces an exact sequence
	\begin{equation} \label{0020}
			0
		\to
			\Lambda
		\to
			\Psi \Order_{X}
		\stackrel{\Frob - 1}{\to}
			\Psi \Order_{X}
		\to
			R^{1} \Psi \Lambda
		\to
			0
	\end{equation}
over $Y_{\et}$.
On the other hand, for any closed point $x \in Y$,
we have an Artin-Schreier exact sequence
	\[
			0
		\to
			\Lambda
		\to
			A_{x}
		\stackrel{\Frob - 1}{\to}
			A_{x}
		\to
			0,
	\]
where $A_{x}$ is the (strict) henselian local ring of $\mathfrak{X}$ at $x$.
Hence we have an exact sequence
	\begin{equation} \label{0021}
			0
		\to
			\Lambda
		\to
			i^{\ast} \Order_{\mathfrak{X}}
		\stackrel{\Frob - 1}{\to}
			i^{\ast} \Order_{\mathfrak{X}}
		\to
			0
	\end{equation}
over $Y_{\et}$ (where this $i^{\ast}$ is the pullback for abelian sheaves, not coherent sheaves).
Combining \eqref{0020} and \eqref{0021},
we obtain an exact sequence
	\[
			0
		\to
			\Psi \Order_{X} / i^{\ast} \Order_{\mathfrak{X}}
		\stackrel{\Frob - 1}{\to}
			\Psi \Order_{X} / i^{\ast} \Order_{\mathfrak{X}}
		\to
			R^{1} \Psi \Lambda
		\to
			0
	\]
over $Y_{\et}$.
The sheaf $j_{\ast} \Order_{X}$ is the union of the subsheaves
$I_{Y}^{- m}$ over integers $m \ge 1$.
Hence
	\[
			\Psi \Order_{X} / i^{\ast} \Order_{\mathfrak{X}}
		\cong
			\bigcup_{n}
				I_{Y}^{- m} / \Order_{\mathfrak{X}},
	\]
where we omit $i^{\ast}$ from the right-hand side
as $I_{Y}^{- m} / \Order_{\mathfrak{X}}$ is supported on $Y$.

By \cite[Lemma (4.5)]{Sai87}
(namely by the negative definiteness of the intersection pairing
and the numerical criterion of ampleness \cite[Theorem (12.1) (iii)]{Lip69}),
there exists an ordered set of positive integers $c = (c_{1}, \dots, c_{n})$
such that $J = I_{Y}^{c} \subset \Order_{\mathfrak{X}}$ is ample.
For any integer $m \ge 1$, define $U_{J}^{m} R^{1} \Psi \Lambda$ to be
the image of $J^{- p m} / \Order_{\mathfrak{X}}$ in $R^{1} \Psi \Lambda$.
We have an exact sequence
	\begin{equation} \label{0024}
			0
		\to
			J^{- m} / \Order_{\mathfrak{X}}
		\stackrel{\Frob - 1}{\to}
			J^{- p m} / \Order_{\mathfrak{X}}
		\to
			U_{J}^{m} R^{1} \Psi \Lambda
		\to
			0.
	\end{equation}
Let
	\[
			\gr_{J}^{m} R^{1} \Psi \Lambda
		=
				U_{J}^{m} R^{1} \Psi \Lambda
			/
				U_{J}^{m - 1} R^{1} \Psi \Lambda.
	\]

\begin{Prop} \label{0044}
	We have $\Gamma(Y, \gr_{J}^{m} R^{1} \Psi \Lambda) = 0$ for $m \gg 0$.
\end{Prop}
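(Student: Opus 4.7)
The plan is to realize $\gr_J^m R^{1} \Psi \Lambda$ as the cokernel of a Frobenius map between coherent sheaves on $Y$, reduce its global sections to the kernel of a Frobenius action on $H^1$ via Proposition \ref{0030}, and then show this kernel vanishes for $m \gg 0$.

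First, I adapt equation \eqref{0024} to the ideal $J$ in place of $I_Y$: the same Artin-Schreier analysis gives the short exact sequence
\[
    0 \to J^{-m}/\Order_{\mathfrak{X}} \xrightarrow{\Frob - 1} J^{-pm}/\Order_{\mathfrak{X}} \to U_J^m R^{1} \Psi \Lambda \to 0.
\]
Applying the snake lemma to the evident diagram comparing this sequence for $m$ and $m-1$ (with the left two vertical maps being inclusions of coherent subsheaves) produces a short exact sequence
\[
    0 \to J^{-m}/J^{-(m-1)} \xrightarrow{\bar F} J^{-pm}/J^{-p(m-1)} \to \gr_J^m R^{1} \Psi \Lambda \to 0. \qquad (\star)
\]
For $m \ge 2$ the containment $J^{-m} \subset J^{-p(m-1)}$ holds, so the $-1$ contribution disappears on the graded quotients and $\bar F$ is just the $p$-th power map.

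Next, Proposition \ref{0030} applies to both $J^{-m}/J^{-(m-1)}$ and $J^{-pm}/J^{-p(m-1)}$, yielding finite filtrations whose graded pieces are line bundles of negative degree on the smooth projective curves $Y_i$. Since negative-degree line bundles on irreducible smooth projective curves have vanishing $H^0$, and this vanishing is preserved by extensions, both coherent sheaves satisfy $\Gamma(Y, \cdot) = 0$. The long exact sequence of $(\star)$ then yields
\[
    \Gamma(Y, \gr_J^m R^{1} \Psi \Lambda) \hookrightarrow \Ker\!\left( H^{1}(Y, J^{-m}/J^{-(m-1)}) \xrightarrow{H^{1}(\bar F)} H^{1}(Y, J^{-pm}/J^{-p(m-1)}) \right).
\]

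The main obstacle is to show this Frobenius-semilinear map on $H^1$ is injective for $m \gg 0$. The ampleness of $J$, encoded by $Z \cdot Y_i < 0$ for $Z = \sum c_i Y_i$, forces each graded piece $L$ of the source filtration to be a line bundle on some $Y_i$ whose degree is proportional to $-m$ in absolute value; the $p$-th power map $\bar F$ then sends $L$ into the corresponding position $L^{\otimes p}$ inside the target filtration. Tracking these filtrations reduces the desired injectivity to injectivity of $H^{1}(Y_i, L) \to H^{1}(Y_i, L^{\otimes p})$ for $L$ of very negative degree on each smooth projective curve $Y_i$. Serre duality translates this into the surjectivity of a Cartier-operator-type map $H^{0}(Y_i, L^{-p} \otimes \omega_{Y_i}) \to H^{0}(Y_i, L^{-1} \otimes \omega_{Y_i})$, which follows from standard properties of the Cartier operator on smooth projective curves once $\deg L^{-1}$ exceeds $\deg \omega_{Y_i}$---a condition guaranteed for $m \gg 0$. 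Combining these steps yields $\Gamma(Y, \gr_J^m R^{1} \Psi \Lambda) = 0$ for $m \gg 0$.
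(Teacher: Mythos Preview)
Your approach is correct in outline but takes a genuinely different route from the paper after the shared exact sequence $(\star)$. The paper never passes to $H^{1}$: instead it identifies $\gr_{J}^{m} R^{1}\Psi\Lambda$ directly as a coherent sheaf on the thickened curve $Y_{J}$, namely
\[
  \gr_{J}^{m} R^{1}\Psi\Lambda \;\cong\; \bigl((\Frob_{*}\Order_{Y_{J^{p}}})/\Order_{Y_{J}}\bigr)\tensor_{\Order_{Y_{J}}} (J^{-m}/J^{-m+1}),
\]
observing that the first factor is a fixed locally free $\Order_{Y_{J}}$-module (from regularity of $\mathfrak{X}$) and the second is the $(-m)$-th power of the ample line bundle $J/J^{2}$ on $Y_{J}$. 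Then $H^{0}=0$ for $m\gg 0$ follows from the vanishing result for negative twists of an ample bundle on a scheme without embedded points. This avoids Serre duality, the Cartier operator, and any decomposition into the reduced components $Y_{i}$.

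Your argument instead attacks $\Ker H^{1}(\bar F)$ via Proposition~\ref{0030}. This works, but the sentence ``tracking these filtrations reduces the desired injectivity to injectivity of $H^{1}(Y_{i},L)\to H^{1}(Y_{i},L^{\otimes p})$'' hides a nontrivial step: the filtrations produced by Proposition~\ref{0030} on source and target are \emph{not} a priori compatible with $\bar F$, since the choices of $i'$ at each stage are independent. What one must do is take the source filtration $I_{Y}^{a^{(0)}}\supset\cdots\supset I_{Y}^{a^{(N)}}$ and impose on the target the coarser filtration by $I_{Y}^{p a^{(k)}}$; then $\bar F$ is filtered, the target graded pieces $I_{Y}^{pa^{(k-1)}}/I_{Y}^{pa^{(k)}}$ still have vanishing $H^{0}$ by Proposition~\ref{0030}, and only after further projecting to $I_{Y}^{pa^{(k-1)}}/I_{Y}^{pa^{(k-1)}+e_{i(k)}}=L_{k}^{\otimes p}$ does one see the map you want. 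Also, the bound ``$\deg L^{-1}>\deg\omega_{Y_{i}}$'' is not quite what guarantees Cartier surjectivity on sections; one needs $H^{1}(Y_{i},B_{1}\Omega^{1}_{Y_{i}}\otimes L^{-1})=0$, which does hold for $m\gg 0$ since $B_{1}\Omega^{1}_{Y_{i}}$ is a fixed locally free sheaf. With these clarifications your proof goes through; the paper's route is shorter because it works on $Y_{J}$ globally rather than dissecting into irreducible components.
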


\begin{proof}
	Assume $m \ge 2$.
	Then $J^{- m} \subset J^{- p m + p}$.
	Hence \eqref{0024} induces an exact sequence
		\[
				0
			\to
				J^{- m} / J^{- m + 1}
			\stackrel{\Frob}{\to}
				J^{- p m} / J^{- p m + p}
			\to
				\gr_{J}^{m} R^{1} \Psi \Lambda
			\to
				0.
		\]
	Let $Y_{J} \subset \mathfrak{X}$ be the closed subscheme defined by the ideal $J$
	and $Y_{J^{p}} \subset \mathfrak{X}$ similarly by $J^{p}$.
	The $p$-th power map $\Order_{\mathfrak{X}} / J \into \Order_{\mathfrak{X}} / J^{p}$
	defines a morphism $\Frob \colon Y_{J^{p}} \to Y_{J}$.
	We have a cartesian diagram
		\begin{equation} \label{0045}
			\begin{CD}
					Y_{J^{p}}
				@> \Frob >>
					Y_{J}
				\\ @V \text{incl} VV @VV \text{incl} V \\
					\mathfrak{X}
				@>> \Frob >
					\mathfrak{X},
			\end{CD}
		\end{equation}
	where the lower horizontal morphism is the absolute Frobenius.
	With $J^{- m} / J^{- m + 1}$ viewed as a line bundle on $Y_{J}$
	and $J^{- p m} / J^{- p m + p}$ as a line bundle on $Y_{J^{p}}$,
	the morphism
		$
				J^{- m} / J^{- m + 1}
			\stackrel{\Frob}{\to}
				J^{- p m} / J^{- p m + p}
		$
	above can be viewed as a morphism
	$J^{- m} / J^{- m + 1} \to \Frob_{\ast}(J^{- p m} / J^{- p m + p})$
	of $\Order_{Y_{J}}$-modules.
	This last morphism can be obtained from the inclusion
	$\Order_{Y_{J}} \into \Frob_{\ast} \Order_{Y_{J^{p}}}$
	by the tensor product with $J^{- m} / J^{- m + 1}$ over $\Order_{Y_{J}}$.
	Hence
		\begin{equation} \label{0046}
				\gr_{J}^{m} R^{1} \Psi \Lambda
			\cong
					((\Frob_{\ast} \Order_{Y_{J^{p}}}) / \Order_{Y_{J}})
				\tensor_{\Order_{Y_{J}}}
					(J^{- m} / J^{- m + 1}).
		\end{equation}
	The regularity of $\mathfrak{X}$ implies that
	the sheaf $(\Frob_{\ast} \Order_{\mathfrak{X}}) / \Order_{\mathfrak{X}}$
	is locally free of finite rank.
	Hence by \eqref{0045}, we know that
	$((\Frob_{\ast} \Order_{Y_{J^{p}}}) / \Order_{Y_{J}})$
	is locally free of finite rank over $\Order_{Y_{J}}$.
	Since $Y_{J}$ is a local complete intersection
	and hence Cohen-Macaulay,
	it has no embedded point (\cite[Tag 0BXG]{Sta22}).
	As $J^{- m} / J^{- m + 1}$ is the ($- m$)-th tensor power
	of the ample line bundle $J / J^{2}$ on $Y_{J}$,
	this implies that
	\eqref{0046} has no non-zero global section for $m \gg 0$
	by \cite[Tag 0FD7]{Sta22}.
\end{proof}

\begin{Prop} \label{0047}
	The group $\Gamma(Y, U_{J}^{m} R^{1} \Psi \Lambda)$ is finite for all $m \ge 1$
\end{Prop}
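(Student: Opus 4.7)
The plan is to take the long exact sequence of \eqref{0024}, use Proposition \ref{0030} to force the global sections of the two coherent terms to vanish, and then reduce the finiteness to showing that the kernel of a certain ``Frobenius minus inclusion'' operator between finite-dimensional $F$-vector spaces is finite. I plan to handle that last step by an unramifiedness (Lang--Steinberg style) argument exploiting the fact that Frobenius has vanishing differential in characteristic $p$.

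Write $K_{m} = I_{Y}^{-\lfloor m/p \rfloor} / \Order_{\mathfrak{X}}$ and $L_{m} = I_{Y}^{-m} / \Order_{\mathfrak{X}}$, so that \eqref{0024} reads $0 \to K_{m} \overset{\Frob - 1}{\to} L_{m} \to U^{m} R^{1} \Psi \Lambda \to 0$. First I would apply Proposition \ref{0030} to each of the pairs $(-\lfloor m/p \rfloor, 0)$, $(-m, 0)$, and $(-m, -\lfloor m/p \rfloor)$; this shows that each of $K_{m}$, $L_{m}$, and $L_{m}/K_{m}$ admits a finite filtration whose successive subquotients are line bundles of negative degree on some of the $Y_{i}$. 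Since each $Y_{i}$ is a proper irreducible curve over $F$ and a line bundle of negative degree on such a curve has no global sections, an induction on the filtration length gives $\Gamma(Y, K_{m}) = \Gamma(Y, L_{m}) = \Gamma(Y, L_{m}/K_{m}) = 0$. The long exact sequence of $0 \to K_{m} \to L_{m} \to L_{m}/K_{m} \to 0$ then shows that the inclusion induces an injection $\iota \colon H^{1}(Y, K_{m}) \hookrightarrow H^{1}(Y, L_{m})$, while that of \eqref{0024} identifies $\Gamma(Y, U^{m} R^{1} \Psi \Lambda)$ with $\ker(\Frob - 1 \colon H^{1}(Y, K_{m}) \to H^{1}(Y, L_{m}))$, where on $H^{1}$ the operator $\Frob - 1$ equals $\Frob - \iota$.

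The hard step is finiteness of this last kernel, since a priori it is only an $\F_{p}$-subspace of the infinite abelian group $H^{1}(Y, L_{m})$. To handle it, I would view $V = H^{1}(Y, K_{m})$ and $W = H^{1}(Y, L_{m})$ as affine $F$-schemes. The map $\iota$ is $F$-linear, while $\Frob \colon V \to W$---coming from a $p$-semi-linear map of coherent sheaves---is given in any $F$-bases by a polynomial morphism whose components are $F$-linear combinations of the $p$-th powers of the source coordinates. Consequently $d\Frob = 0$ at every $F$-point, so the differential $d(\Frob - \iota) = -\iota$ is everywhere injective. Therefore $\Frob - \iota$ is an unramified morphism of smooth affine $F$-schemes, its scheme-theoretic fiber over $0$ is finite \'etale over $F$, and its set of $F$-points---which is precisely $\ker(\Frob - 1)$---is finite. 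The conceptual obstacle is that Lang--Steinberg does not apply verbatim because $V$ and $W$ may have different dimensions; what saves the argument is the injectivity of $\iota$, which is ensured (through Proposition \ref{0030}) by the negative definiteness of the intersection pairing on exceptional divisors.
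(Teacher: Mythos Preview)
Your proof is correct and follows essentially the same route as the paper's: both pass to the long exact sequence of \eqref{0024}, use Proposition~\ref{0030} to get $\Gamma(Y, I_{Y}^{-m}/\Order_{\mathfrak{X}}) = 0$ and the injectivity of the inclusion on $H^{1}$, and then reduce to the statement that a Frobenius-semilinear map minus an injective linear map between finite-dimensional $F$-vector spaces has finite kernel. The paper simply invokes this last statement as a ``general fact'' (after first replacing $m$ by a multiple of $p$ so that $\lfloor m/p\rfloor = m/p$), whereas you supply an explicit unramifiedness argument for it; otherwise the two proofs are the same.
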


\begin{proof}
	By Proposition \ref{0049} \eqref{0050},
	we have $\Gamma(Y, J^{- p m} / \Order_{\mathfrak{X}}) = 0$.
	Hence the sequence \eqref{0024} induces an exact sequence
		\begin{equation} \label{0039}
				0
			\to
				\Gamma(Y, U_{J}^{m} R^{1} \Psi \Lambda)
			\to
				H^{1}(Y, J^{- m} / \Order_{\mathfrak{X}})
			\stackrel{\Frob - 1}{\to}
				H^{1}(Y, J^{- p m} / \Order_{\mathfrak{X}}).
		\end{equation}
	The morphisms
		\begin{equation} \label{0025}
				\Frob \; \text{and} \; 1
			\colon
				H^{1}(Y, J^{- m} / \Order_{\mathfrak{X}})
			\to
				H^{1}(Y, J^{- p m} / \Order_{\mathfrak{X}}).
		\end{equation}
	are Frobenius-linear and linear maps, respectively,
	of finite-dimensional $F$-vector spaces.
	Consider the exact sequence
		\[
				0
			\to
				J^{- m} / \Order_{\mathfrak{X}}
			\to
				J^{- p m} / \Order_{\mathfrak{X}}
			\to
				J^{- p m} / J^{- m}
			\to
				0.
		\]
	The ampleness of $J$ shows that
	the divisor $- m \sum_{i} c_{i} Y_{i}$ has positive intersection with every $Y_{i'}$
	and, in particular, is nef.
	Hence Proposition \ref{0049} \eqref{0050} shows that
	$\Gamma(Y, J^{- p m} / J^{- m}) = 0$.%
	\footnote{
		Alternatively, we may assume that $m \gg 1$
		and can see that $\Gamma(Y, J^{- p m} / J^{- m}) = 0$ for $m \gg 1$
		by the same argument as the last part of the proof of Proposition \ref{0044}.
	}
	Hence the morphism $1$ in \eqref{0025} is injective.
	Now the result follows from Lemma \ref{0058} below.
\end{proof}

\begin{Lem} \label{0058}
	Let $V$ and $W$ be finite-dimensional $F$-vector spaces.
	Let $f \colon V \to W$ and $g \colon V \to W$ be Frobenius-linear
	and linear maps, respectively.
	Assume that $g$ is injective.
	Then the kernel of $f - g$ is a finite group.
\end{Lem}

\begin{proof}
	Take a linear splitting $p \colon W \onto \Im(g)$ of the natural inclusion $\Im(g) \into W$.
	Then $\Ker(f - g) \subset \Ker(p \compose (f - g))$.
	Hence we may replace $f$ and $g$ by $p \compose f$ and $p \compose g$, respectively.
	Then $g$ is bijective.
	In this case, the lemma is \cite[Tag 0A3L]{Sta22}.
\end{proof}

\begin{Prop}
	The group $\Gamma(Y, R^{1} \Psi \Lambda)$ is finite.
\end{Prop}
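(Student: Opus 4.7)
The plan is to combine Propositions \ref{0044} and \ref{0047} by exploiting that the filtrations $U^{m}$ and $U_{J}^{m}$ are cofinal and each exhausts $R^{1} \Psi \Lambda$. Since $\Psi \Order_{X} / i^{\ast} \Order_{\mathfrak{X}} = \bigcup_{m \ge 1} I_{Y}^{-m} / \Order_{\mathfrak{X}}$, we have $R^{1} \Psi \Lambda = \bigcup_{m} U^{m} R^{1} \Psi \Lambda$. Using that $c_{i} \ge 1$ for all $i$, one has $I_{Y}^{-pm \min_{i} c_{i}} \subset J^{-pm} \subset I_{Y}^{-pm \max_{i} c_{i}}$, which gives the sandwich $U^{pm \min_{i} c_{i}} R^{1} \Psi \Lambda \subset U_{J}^{m} R^{1} \Psi \Lambda \subset U^{pm \max_{i} c_{i}} R^{1} \Psi \Lambda$. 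Hence the two filtrations are cofinal and $R^{1} \Psi \Lambda = \bigcup_{m} U_{J}^{m} R^{1} \Psi \Lambda$ as well. Because $Y$ is noetherian, $\Gamma(Y, \var)$ commutes with filtered colimits of sheaves, giving $\Gamma(Y, R^{1} \Psi \Lambda) = \dirlim_{m} \Gamma(Y, U_{J}^{m} R^{1} \Psi \Lambda)$.

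Next, I would apply $\Gamma(Y, \var)$ to the short exact sequence
\[
	0 \to U_{J}^{m} R^{1} \Psi \Lambda \to U_{J}^{m+1} R^{1} \Psi \Lambda \to \gr_{J}^{m+1} R^{1} \Psi \Lambda \to 0.
\]
By Proposition \ref{0044}, the last term has vanishing global sections for $m \gg 0$, so the (injective) transition map $\Gamma(Y, U_{J}^{m} R^{1} \Psi \Lambda) \to \Gamma(Y, U_{J}^{m+1} R^{1} \Psi \Lambda)$ is an isomorphism for $m \gg 0$. Therefore the ascending chain stabilizes at some $m_{0}$, giving $\Gamma(Y, R^{1} \Psi \Lambda) = \Gamma(Y, U_{J}^{m_{0}} R^{1} \Psi \Lambda)$.

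Finally, by the cofinality above, $U_{J}^{m_{0}} R^{1} \Psi \Lambda$ is contained in $U^{m'} R^{1} \Psi \Lambda$ for $m' = p m_{0} \max_{i} c_{i}$, so its group of global sections embeds into the finite group $\Gamma(Y, U^{m'} R^{1} \Psi \Lambda)$ furnished by Proposition \ref{0047}, and is therefore finite. The real substance of the proof lies in Propositions \ref{0044} and \ref{0047}; the only point in the present argument requiring any care is the commutation of $\Gamma(Y, \var)$ with the filtered colimit of the $U_{J}^{m}$, which is a standard consequence of the noetherianness of $Y$.
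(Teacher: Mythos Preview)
Your argument is correct and follows essentially the same route as the paper: use Proposition \ref{0044} to conclude that $\Gamma(Y, R^{1}\Psi\Lambda)=\Gamma(Y, U_{J}^{m}R^{1}\Psi\Lambda)$ for $m$ large, then embed $U_{J}^{m}$ in some $U^{m'}$ and invoke Proposition \ref{0047}. The only cosmetic difference is that the paper phrases the first step as $\Gamma(Y, R^{1}\Psi\Lambda / U_{J}^{m}R^{1}\Psi\Lambda)=0$ for $m\gg 0$, whereas you express it as stabilization of the chain $\Gamma(Y, U_{J}^{m})$; your formulation is in fact more explicit about the colimit/noetherian point that the paper leaves tacit.
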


\begin{proof}
	By Proposition \ref{0044},
	for $m$ large enough, we have
	$\Gamma(Y, R^{1} \Psi \Lambda / U_{J}^{m} R^{1} \Psi \Lambda) = 0$.
	Since $\Gamma(Y, U_{J}^{m} R^{1} \Psi \Lambda)$ is finite by
	Proposition \ref{0047},
	it follows that $\Gamma(Y, R^{1} \Psi \Lambda)$ is finite.
\end{proof}

This proves Theorem \ref{0018} in the equal characteristic case.


\section{%
	\texorpdfstring{Mixed characteristic case: $p$-adic nearby cycles}
	{Mixed characteristic case: p-adic nearby cycles}
}
\label{0036}

For the rest of the paper, we treat the mixed characteristic case.
Assume that the characteristic of the fraction field of $A$ is zero.
We may assume that $A$ contains a fixed primitive $p$-th root of unity $\zeta_{p}$.
Let $S = \{\ideal{p}_{1}, \dots, \ideal{p}_{l}\} \subset P$ be
the set of all height one prime ideals containing $p$ and set $U = X \setminus S = \Spec A[1 / p]$.
For $\ideal{p} \in P$, let $A_{\ideal{p}}$ be the henselian local ring of $A$ at $\ideal{p}$.
Let $K_{\ideal{p}}$ the fraction field of $A_{\ideal{p}}$
and $\kappa(\ideal{p})$ its residue field.
If $\ideal{p} \in S$,
then let $e_{\ideal{p}}$ be the absolute ramification index of $A_{\ideal{p}}$
and set $f_{\ideal{p}} = p e_{\ideal{p}} / (p - 1)$.
Note that the facts $\zeta_{p} \in A$ and $[\Q_{p}(\zeta_{p}) : \Q_{p}] = p - 1$ imply that
$e_{\ideal{p}} \in (p - 1) \Z$ and hence $f_{\ideal{p}} \in p \Z$.
Define $\mathfrak{T}(1)$ to be the complex of \'etale sheaves $0 \to \Gm \to \Gm \to 0$ on $X$
with non-zero terms in degrees $0$ and $1$
given by the $p$-th power map.
View it as an object of the derived category $D(X_{\et})$.
The morphism $\Lambda \to \Gm$ sending $1$ to $\zeta_{p}$ defines a morphism
	\begin{equation} \label{0000}
		\Lambda \to \mathfrak{T}(1)
	\end{equation}
in $D(X_{\et})$.
It is an isomorphism over $U$.

By suitably replacing the resolution $\mathfrak{X} \to \Spec A$,
we may assume that $Y \cup Z \subset \mathfrak{X}$ is
supported on a strict normal crossing divisor,
where $Z$ is the (reduced) closure of $S$ in $\mathfrak{X}$ (\cite[Tag 0BIC]{Sta22}).
For a closed point $x \in Y$,
let $A_{x}$ and $B_{x}$ be the henselian local rings of $\mathfrak{X}$ and $Y$, respectively, at $x$.
The ring $A_{x}$ is regular and hence a UFD
by the Auslander-Buchsbaum theorem.
Let $R_{x}$ be the affine ring of $\Spec A_{x} \times_{\mathfrak{X}} X$
and let $R'_{x} = A_{x}[1 / p]$
(which are regular UFD's).
Let $I_{Z} \subset \Order_{\mathfrak{X}}$ be the ideal sheaf of $Z$.
For each $j$, let $Z_{j}$ be the closure of $\ideal{p}_{j} \in S$ in $\mathfrak{X}$.
Let $I_{Z_{j}} \subset \Order_{\mathfrak{X}}$ be the ideal sheaf of $Z_{j}$.
For an ordered set of integers $m' = (m'_{1}, \dots, m'_{l})$,
let $I_{Z}^{m'} = \prod_{j} I_{Y_{j}}^{m_{j}'}$.
Let $A_{Y_{i}}$ be the henselian local ring of $\mathfrak{X}$
at the generic point of $Y_{i}$.
Let $K_{Y_{i}}$ be its fraction field.
Let $e_{Y_{i}}$ be its absolute ramification index
and set $f_{Y_{i}} = p e_{Y_{i}} / (p - 1)$.
Set $f_{Y} = (f_{Y_{1}}, \dots, f_{Y_{n}})$.
Set $e_{Z_{j}} = e_{\ideal{p}_{j}}$ and $f_{Z_{j}} = f_{\ideal{p}_{j}}$.
Set $f_{Z} = (f_{Z_{1}}, \dots, f_{Z_{l}})$.
Again, we have $e_{Y_{i}}, e_{Z_{j}} \in (p - 1) \Z$
and $f_{Y_{i}}, f_{Z_{j}} \in p \Z$.

The stalk of $R^{1} \Psi \Gm$ at a closed point $x \in Y$
is $H^{1}(R_{x}, \Gm)$, which is zero since $R_{x}$ is a UFD.
It follows that $R^{1} \Psi \Gm = 0$.
Hence $\Psi \Gm / p \Psi \Gm \isomto R^{1} \Psi \mathfrak{T}(1)$.
For ordered sets of non-negative integers
$m = (m_{1}, \dots, m_{n})$ and $m' = (m'_{1}, \dots, m'_{l})$ not all zero,
we have a subsheaf $1 + I_{Y}^{m} I_{Z}^{m'}$ of $\Gm$ on $\mathfrak{X}_{\et}$.
Define $U^{(m, m')} R^{1} \Psi \mathfrak{T}(1)$ to be
the image of $i^{\ast}(1 + I_{Y}^{m} I_{Z}^{m'})$ ($\subset i^{\ast} \Gm \subset \Psi \Gm$)
in $R^{1} \Psi \mathfrak{T}(1)$.

\begin{Prop}
	The morphism \eqref{0000} induces an isomorphism
		\[
				R^{1} \Psi \Lambda
			\isomto
				U^{(0, f_{Z})} R^{1} \Psi \mathfrak{T}(1)
			\quad
			(\subset
				R^{1} \Psi \mathfrak{T}(1)
			).
		\]
\end{Prop}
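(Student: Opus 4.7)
The plan is to use the canonical truncation of $\mathfrak{T}(1)$ to reduce to identifying a kernel, and then to apply the $p$-adic logarithm at each prime of $S$ to match this kernel against $1 + I_{Z}^{f_{Z}}$.

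First, using $h^{0}(\mathfrak{T}(1)) = \mu_{p}$ and $h^{1}(\mathfrak{T}(1)) = \Gm/\Gm^{p}$ and the identification $\Lambda \cong \mu_{p}$ via $\zeta_{p}$, the canonical truncation gives a distinguished triangle $\mu_{p} \to \mathfrak{T}(1) \to (\Gm/\Gm^{p})[-1]$ through which \eqref{0000} factors. Applying $R\Psi$ and taking the cohomology long exact sequence yields
\[
	0 \to R^{1}\Psi\Lambda \to R^{1}\Psi\mathfrak{T}(1) \to \Psi(\Gm/\Gm^{p}),
\]
so the morphism is injective with image $\ker(R^{1}\Psi\mathfrak{T}(1) \to \Psi(\Gm/\Gm^{p}))$. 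It remains to identify this kernel with $U^{(0, f_{Z})} R^{1}\Psi\mathfrak{T}(1) \subseteq R^{1}\Psi\mathfrak{T}(1) = \Psi\Gm / p\Psi\Gm$.

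The inclusion $U^{(0, f_{Z})} \subseteq \ker$ amounts to showing that $1 + I_{Z}^{f_{Z}} \subseteq \Gm^{p}$ as subsheaves of $\Gm$ on $X_{\et}$. Since $\Gm/\Gm^{p}$ is supported on $S$, I would check this stalkwise at each $\ideal{p}_{j} \in S$, where $A_{\ideal{p}_{j}}$ is a complete discrete valuation ring of absolute ramification index $e_{\ideal{p}_{j}}$. The $p$-adic logarithm furnishes an isomorphism $\log \colon 1 + \ideal{p}_{j}^{n} \isomto \ideal{p}_{j}^{n}$ for $n > e_{\ideal{p}_{j}}/(p-1)$ converting the $p$-th power into multiplication by $p$, which takes $\ideal{p}_{j}^{n}$ onto $\ideal{p}_{j}^{n + e_{\ideal{p}_{j}}}$. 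Choosing $n + e_{\ideal{p}_{j}} = f_{\ideal{p}_{j}} = p e_{\ideal{p}_{j}}/(p-1)$ gives $1 + \ideal{p}_{j}^{f_{\ideal{p}_{j}}} \subseteq (A_{\ideal{p}_{j}}^{\times})^{p}$, hence the containment into $\Gm^{p}$ \'etale-locally at $\ideal{p}_{j}$.

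The reverse inclusion $\ker \subseteq U^{(0, f_{Z})}$ is where the main work lies. Given a representative $u \in \Psi\Gm^{p}$ of a kernel class, i.e., $u$ \'etale-locally a $p$-th power on $X$, the task is to modify $u$ by an element of $(\Psi\Gm)^{p}$ so that the result lies in $i^{*}(1 + I_{Z}^{f_{Z}})$. The \'etale-local $p$-th power condition forces $v_{\ideal{p}_{j}}(u) \equiv 0 \pmod{p}$ for each $\ideal{p}_{j}$ meeting the stalk, so multiplication by a $p$-th power of a suitable uniformizer brings $u$ into $A_{\ideal{p}_{j}}^{\times}$; the inverse log/exp then provides a $p$-th power of a unit which further pushes $u$ into $1 + \ideal{p}_{j}^{f_{\ideal{p}_{j}}}$ at each branch. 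Carrying out these local modifications simultaneously at the at most two branches of $Y \cup Z$ meeting a given closed point of $Y$, using the strict normal crossings hypothesis to control the branches and a successive approximation argument on the completed local rings to preserve congruences achieved on earlier branches, yields a single global representative in $1 + I_{Z}^{f_{Z}}$. The delicate technical point is precisely this compatible gluing of local $p$-th power adjustments without disturbing the congruence conditions at neighboring branches.
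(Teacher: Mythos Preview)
Your setup via the truncation triangle is clean and correct: the sequence
\[
0 \to R^{1}\Psi\Lambda \to R^{1}\Psi\mathfrak{T}(1) \to \Psi(\Gm/\Gm^{p})
\]
does identify $R^{1}\Psi\Lambda$ as a kernel, and your argument for the inclusion $U^{(0,f_{Z})} \subseteq \ker$ is essentially right (though $A_{\ideal{p}_{j}}$ is henselian, not complete; the fact $1+\ideal{p}^{f_{\ideal{p}}}\subseteq (A_{\ideal{p}}^{\times})^{p}$ still holds via Hensel's lemma or passage to the completion). This is a genuinely different route from the paper, which instead works stalkwise at each closed point $x\in Y\cap Z$ with a Gysin-type localization diagram comparing $H^{1}(R_{x},\Lambda)$, $H^{1}(R_{x}',\Lambda)$ and $H^{1}(K_{\ideal{p}},\Lambda)/H^{1}(\kappa(\ideal{p}),\Lambda)$, and then invokes the filtration lemma (Lemma~\ref{0031}).

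The reverse inclusion, however, has a real gap, and the difficulty is not the one you name. Since both sides are subsheaves of $R^{1}\Psi\mathfrak{T}(1)$, equality is a stalkwise question at closed points of $Y$; there is \emph{no gluing problem}. At $x\in Y\cap Z_{j}$ (one $Y$-branch, one $Z$-branch by strict normal crossings), a representative $u\in R_{x}^{\times}=A_{x}^{\times}\times\pi_{Y}^{\Z}$ automatically has $\ideal{p}_{x}$-valuation zero, so your valuation adjustment is vacuous. The kernel condition says only that $u$ becomes a $p$-th power in the \emph{strict henselization} $A_{x,(\pi_{Z})}^{sh}$; your log/exp produces a $p$-th root there (or in a completion), not in $R_{x}$. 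What you must show is that an element of $R_{x}^{\times}/(R_{x}^{\times})^{p}$ whose image in $K_{\ideal{p}}^{\times}/(K_{\ideal{p}}^{\times})^{p}$ lies in $U_{K_{\ideal{p}}}^{(f_{\ideal{p}})}$ already lies in $U_{R_{x}}^{(f_{\ideal{p}})}$. This descent is exactly the content of the paper's Lemma~\ref{0031}: injectivity of the graded pieces $U_{R_{x}}^{(m)}/U_{R_{x}}^{(m+1)}\to U_{K_{\ideal{p}}}^{(m)}/U_{K_{\ideal{p}}}^{(m+1)}$ for $0\le m\le f_{\ideal{p}}-1$. Your sketch does not supply any substitute for this step.
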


\begin{proof}
	On $Y \setminus Z$, both sides are isomorphic to the whole $R^{1} \Psi \mathfrak{T}(1)$.
	Let $x \in Y \cap Z$.
	Let $\ideal{p} = \ideal{p}_{j} \in S$ be its unique generalization in $S$.
	The inverse image of $Z_{j}$ under $\Spec A_{x} \to \mathfrak{X}$
	corresponds to a prime ideal $\ideal{p}_{x} \in \Spec A_{x}$.
	The stalks of $R^{1} \Psi \Lambda$ and $R^{1} \Psi \mathfrak{T}(1)$ at $x$ are given by
	$H^{1}(R_{x}, \Lambda)$ and
	$H^{1}(R_{x}, \mathfrak{T}(1)) \cong R_{x}^{\times} / R_{x}^{\times p}$,
	respectively.
	Localization gives a commutative diagram with exact rows
		\[
			\begin{CD}
					0
				@>>>
					H^{1}(R_{x}, \Lambda)
				@>>>
					H^{1}(R_{x}', \Lambda)
				@>>>
						H^{1}(K_{\ideal{p}}, \Lambda)
					/
						H^{1}(\kappa(\ideal{p}), \Lambda)
				\\
				@. @VVV @| @.
				\\
					0
				@>>>
					H^{1}(R_{x}, \mathfrak{T}(1))
				@>>>
					H^{1}(R_{x}', \mathfrak{T}(1)).
				@.
			\end{CD}
		\]
	Under the isomorphism
	$H^{1}(K_{\ideal{p}}, \Lambda) \cong K_{\ideal{p}}^{\times} / K_{\ideal{p}}^{\times p}$,
	the subgroup $H^{1}(\kappa(\ideal{p}), \Lambda)$ is identified with
	the image of $1 + \ideal{p}^{f_{\ideal{p}}} A_{\ideal{p}}$
	by Lemma \ref{0059} below.
	Hence this diagram implies that $H^{1}(R_{x}, \Lambda)$ is identified with
	the subgroup of $H^{1}(R_{x}', \mathfrak{T}(1))$ that maps into
	the image of $1 + \ideal{p}^{f_{\ideal{p}}} A_{\ideal{p}}$
	in $K_{\ideal{p}}^{\times} / K_{\ideal{p}}^{\times p}$.
	By Lemma \ref{0031} below, this subgroup is the image of
	$1 + \ideal{p}_{x}^{f_{\ideal{p}}} A_{x}$ in $R_{x}^{\times} / R_{x}^{\times p}$.
\end{proof}

\begin{Lem} \label{0059}
	Under the isomorphisms
	$H^{1}(K_{\ideal{p}}, \Lambda) \cong K_{\ideal{p}}^{\times} / K_{\ideal{p}}^{\times p}$
	and $H^{1}(\kappa(\ideal{p}), \Lambda) \cong \kappa(\ideal{p}) / (\Frob - 1) \kappa(\ideal{p})$,
	the map $H^{1}(\kappa(\ideal{p}), \Lambda) \into H^{1}(K_{\ideal{p}}, \Lambda)$
	is given by sending $a \in \kappa(\ideal{p})$
	to $1 + (\zeta_{p} - 1)^{p} \Tilde{a}$,
	where $\Tilde{a}$ is any lift of $a$ to $A_{\ideal{p}}$.
\end{Lem}

\begin{proof}
	For an indeterminate $z$,
	the polynomial
		\[
			\frac{
				(1 + (\zeta_{p} - 1) z)^{p} - 1
			}{
				(\zeta_{p} - 1)^{p}
			}
		\]
	has coefficients in $\Z[\zeta_{p}]$
	whose image in $\F_{p}$ (or reduction) is $z^{p} - z$.
	Hence the Artin-Schreier equation $z^{p} - z = a$ over $\kappa(\ideal{p})$
	lifts to the Kummer equation
	$(1 + (\zeta_{p} - 1) z)^{p} = 1 + (\zeta_{p} - 1)^{p} \Tilde{a}$ over $K_{\ideal{p}}$.
	The Galois action $z \mapsto z + 1$ over $\kappa(\ideal{p})$ corresponds to
	the Galois action $1 + (\zeta_{p} - 1) z \mapsto \zeta_{p} (1 + (\zeta_{p} - 1) z)$ over $K_{\ideal{p}}$.
\end{proof}

\begin{Lem} \label{0031}
	For $m \ge 1$, let $U_{R_{x}}^{(m)}$ be the image of $1 + \ideal{p}_{x}^{m} A_{x}$
	in $U_{R_{x}}^{(0)} = R_{x}^{\times} / R_{x}^{\times p}$
	and let $U_{K_{\ideal{p}}}^{(m)}$ be the image of $1 + \ideal{p}^{m} A_{\ideal{p}}$
	in $U_{K_{\ideal{p}}}^{(0)} = K_{\ideal{p}}^{\times} / K_{\ideal{p}}^{\times p}$.
	Then the natural map
		\[
				U_{R_{x}}^{(m)} / U_{R_{x}}^{(m + 1)}
			\to
				U_{K_{\ideal{p}}}^{(m)} / U_{K_{\ideal{p}}}^{(m + 1)}
		\]
	is injective for $0 \le m \le f_{\ideal{p}} - 1$.
\end{Lem}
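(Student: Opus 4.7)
The plan is to identify both graded pieces $U^{(m)}/U^{(m+1)}$ explicitly and reduce the desired injectivity to basic facts about the henselian DVR $B_x := A_x/\ideal{p}_x A_x$ and its fraction field $\kappa(\ideal{p})$. Near $x$, the regularity of $\mathfrak{X}$ together with the strict-normal-crossing assumption provide local parameters $(y, z) \in A_x$ such that the local component of $Y$ through $x$ is $V(y)$ and $Z_j = V(z)$; hence $\ideal{p}_x = zA_x$, $R_x = A_x[1/y]$ with $R_x^{\times} = y^{\Z} \cdot A_x^{\times}$, $K_{\ideal{p}}^{\times} = z^{\Z} \cdot A_{\ideal{p}}^{\times}$, and $B_x$ is a DVR with uniformizer $\bar y$ and fraction field $\kappa(\ideal{p})$. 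For $m \ge 1$, the map $1 + a \mapsto a$ identifies $(1 + \ideal{p}^m)/(1 + \ideal{p}^{m+1}) \cong \ideal{p}^m/\ideal{p}^{m+1}$, which on the two sides is additively $B_x$ and $\kappa(\ideal{p})$ respectively, and then $U^{(m)}/U^{(m+1)}$ is this quotient by the image of the $p$-th power map reduced modulo $\ideal{p}^{m+1}$.

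The key computation is the valuation analysis of the $p$-th power map. For $v = 1 + z^k v_0$ with $k \ge 1$, the binomial expansion together with $v_z\bigl(\binom{p}{i}\bigr) = e_{\ideal{p}}$ for $1 \le i \le p-1$ yields $v_z(v^p - 1) = \min(pk,\, k + e_{\ideal{p}})$ away from the exceptional value $k = e_{\ideal{p}}/(p-1)$. A non-zero contribution to the level-$m$ graded piece requires this minimum to equal $m$, giving either a Frobenius-type term $pk = m$ with $k < e_{\ideal{p}}/(p-1)$, in which case $v^p - 1 \equiv z^{pk} v_0^p \pmod{\ideal{p}^{m+1}}$, or a ramified-type term $k + e_{\ideal{p}} = m$ with $k > e_{\ideal{p}}/(p-1)$, in which case $m > e_{\ideal{p}} + e_{\ideal{p}}/(p-1) = f_{\ideal{p}}$. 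Under the hypothesis $m \le f_{\ideal{p}} - 1$ the ramified case is excluded, so the $p$-th power subgroup of the graded piece is $B_x^p \subset B_x$ on the $R_x$ side and $\kappa(\ideal{p})^p \subset \kappa(\ideal{p})$ on the $K_{\ideal{p}}$ side (and both subgroups are zero if $p \nmid m$ or $m/p \ge e_{\ideal{p}}/(p-1)$).

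Injectivity of $B_x/B_x^p \hookrightarrow \kappa(\ideal{p})/\kappa(\ideal{p})^p$ then follows from $B_x$ being integrally closed in $\kappa(\ideal{p})$: a $p$-th root in $\kappa(\ideal{p})$ of $b \in B_x$ satisfies the monic equation $X^p - b$, hence lies in $B_x$. For the boundary case $m = 0$ I would argue separately: if $u = y^a v \in R_x^{\times}$ lies in $(1 + \ideal{p} A_{\ideal{p}}) K_{\ideal{p}}^{\times p}$, then reducing modulo $\ideal{p}$ inside $\kappa(\ideal{p})^{\times} = \bar y^{\Z} \times B_x^{\times}$ and comparing $\bar y$-valuations in $\bar y^a \bar v = \bar w^p$ forces $a \in p\Z$ and $\bar v \in B_x^{\times p}$, after which Hensel-lifting $\bar v$ to $A_x^{\times p}$ yields the required factorization $u \in (1 + zA_x) R_x^{\times p}$. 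The main obstacle is the valuation analysis in the second paragraph: the bound $m \le f_{\ideal{p}} - 1$ is sharp because at $m = f_{\ideal{p}}$ the ramified term would contribute the unit $\overline{p/z^{e_{\ideal{p}}}} \in \kappa(\ideal{p})^{\times}$, which is not a unit in $B_x$ (its $\bar y$-valuation equals the multiplicity of the local component of $Y$ in the closed fiber), breaking the compatibility of the two graded pieces beyond the stated range.
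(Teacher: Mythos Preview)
Your proof is correct and follows the same route the paper takes: the paper's own proof is the single sentence ``These graded pieces can be explicitly calculated; see \cite[Section 4]{Sai86},'' and what you have written is precisely that explicit calculation. The valuation analysis of the $p$-th power map, the resulting identification of the graded pieces as $B_x$ or $B_x/B_x^{p}$ on the $R_x$-side versus the corresponding residue-field quantities on the $K_{\ideal p}$-side, and the integral-closedness argument for injectivity are all exactly what underlies the reference.

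One small imprecision worth flagging: your $B_{x}:=A_{x}/\ideal p_{x}A_{x}$ is the henselian local ring of $Z_{j}$ at $x$, and its fraction field is the fraction field of that henselization, which is in general a proper (separable algebraic) extension of $\kappa(\ideal p)$, the function field of $Z_{j}$. So the equality $\operatorname{Frac}(B_{x})=\kappa(\ideal p)$ you use is not literally true. This does not damage the argument: in the localization sequence the role of ``$K_{\ideal p}$'' is really played by the henselization of $A_{x}$ at $\ideal p_{x}$, whose residue field is $\operatorname{Frac}(B_{x})$, and your integral-closedness step $B_{x}\cap L^{p}=B_{x}^{p}$ works verbatim with $L=\operatorname{Frac}(B_{x})$. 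Likewise in the $m=0$ case, the reduction lands in $\operatorname{Frac}(B_{x})^{\times}=\bar y^{\Z}\cdot B_{x}^{\times}$ rather than $\kappa(\ideal p)^{\times}$, and the rest goes through unchanged (note also that ``Hensel-lifting'' is not needed there: any lift $w\in A_{x}^{\times}$ of $\bar w$ already gives $v/w^{p}\in 1+zA_{x}$).
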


\begin{proof}
	These graded pieces can be explicitly calculated;
	see \cite[Section 4]{Sai86} for example.
\end{proof}

\begin{Prop} \label{0023}
	The inclusion
		\[
				\Gamma \bigl(
					Y, U^{(1, f_{Z})} R^{1} \Psi \mathfrak{T}(1)
				\bigr)
			\into
				\Gamma \bigl(
					Y, U^{(0, f_{Z})} R^{1} \Psi \mathfrak{T}(1)
				\bigr)
		\]
	has finite cokernel.
\end{Prop}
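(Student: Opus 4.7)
The plan is to bound $\Gamma(Y, Q)$, where $Q := U^{(0,f_{Z})} R^{1}\Psi\mathfrak{T}(1)/U^{(1,f_{Z})} R^{1}\Psi\mathfrak{T}(1)$ is the cokernel in question. My strategy is to realise $Q$ as a quotient of a coherent sheaf on $Y$, reduce to finite dimensionality over $F$ using properness and the negative-definiteness properties of the intersection pairing, and then cut down to finiteness as a set via a Frobenius-fixed-point argument analogous to Proposition~\ref{0047}.

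First, I would present $Q$ as a quotient of the multiplicative sheaf $\mathcal{Q}^{\mathrm{mult}} := i^{\ast}(1+I_{Z}^{f_{Z}})/i^{\ast}(1+I_{Y} I_{Z}^{f_{Z}})$ on $Y_{\et}$ and translate this multiplicative data to additive coherent data. On the formal neighbourhood of $Y\cap Z$, the exponent $f_{Z_{j}} = p e_{Z_{j}}/(p-1)$ lies above the $p$-adic convergence threshold of the logarithm, so the map $1+a\mapsto a$ gives an isomorphism of $\mathcal{Q}^{\mathrm{mult}}$ with the coherent $\mathcal{O}_{Y}$-module $I_{Z}^{f_{Z}}/I_{Y} I_{Z}^{f_{Z}}$; on $Y\setminus Z$ the sheaf $\mathcal{Q}^{\mathrm{mult}}$ simplifies to $\mathbf{G}_{m,Y}$, whose image in $R^{1}\Psi\mathfrak{T}(1)$ modulo $p$-th powers is controlled by $F^{\times} = F^{\times p}$ on the unit part.

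Second, I would bound the coherent picture. Restricted to a component $Y_{i}$ of $Y$, the invertible sheaf $I_{Z}^{f_{Z}}|_{Y_{i}}$ has degree $-\sum_{j} f_{Z_{j}} (Z_{j}\cdot Y_{i}) \le 0$, with strict inequality exactly when $Y_{i}$ meets $Z$. The components meeting $Z$ contribute zero global sections, while the remaining components contribute at most a copy of $F$, so $\Gamma(Y, I_{Z}^{f_{Z}}/I_{Y} I_{Z}^{f_{Z}})$ is a finite-dimensional $F$-vector space. Finally, since $Q$ is a $\Lambda$-module, the $p$-torsion condition translates via the identification of Step~1 into a Frobenius-semilinear equation $\Frob(u) = c\, u$ on this finite-dimensional $F$-vector space; the solution set is a finite $\mathbf{F}_{p}$-vector space by the argument of Proposition~\ref{0047}, giving the desired finiteness of $\Gamma(Y,Q)$.

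The hard part will be making Step~1 rigorous: gluing the $p$-adic logarithmic description along $Y\cap Z$ with the multiplicative description on $Y\setminus Z$ into a single coherent model on $Y$ compatible with the Frobenius action of Step~3. The points of $Y\cap Z$ are finitely many, but the interaction of the two ideals $I_{Y}$ and $I_{Z}$ there requires care, and the correct model is dictated by the precise exponents $f_{Z_{j}}$ and the local SNCD equations. A secondary difficulty is that components of $Y$ disjoint from $Z$ do not force global vanishing by degree considerations alone, so the Frobenius argument of Step~3 is essential precisely there.
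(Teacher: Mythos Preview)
Your Step~1 contains a genuine error, not just a gluing difficulty. You claim that near $Y\cap Z$ the $p$-adic logarithm identifies $\mathcal{Q}^{\mathrm{mult}}$ with the additive group of the line bundle $I_{Z}^{f_{Z}}/I_{Y}I_{Z}^{f_{Z}}$. But these sheaves are supported on $Y$, which has characteristic $p$: the stalk of $\mathcal{Q}^{\mathrm{mult}}$ at $x\in Y_{i}\cap Z_{j}$ is the multiplicative subgroup $1+\bar{\pi}_{Z}^{\,f_{Z_{j}}}B_{x}\subset B_{x}^{\times}$, where $B_{x}=A_{x}/I_{Y,x}$ is a characteristic-$p$ DVR. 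Since $(1+a)^{p}=1+a^{p}$ in $B_{x}$ and $B_{x}$ is a domain, this group is torsion-free, whereas the additive group of the line bundle is killed by $p$; they are not isomorphic. The threshold $f_{Z_{j}}>e_{Z_{j}}/(p-1)$ controls only the $Z$-adic valuation, while $p$ also has positive $Y$-adic valuation $e_{Y_{i}}$, so already $a^{p}/p\in I_{Y}^{-e_{Y_{i}}}I_{Z}^{\,p f_{Z_{j}}-e_{Z_{j}}}$ has negative $Y$-exponent and the logarithm series does not land in $A_{x}$. Your Step~3 then cannot proceed as stated, since the ``coherent model'' on which the Frobenius equation was to be written does not exist. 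Separately, on $Y\setminus Z$ the passage from $\Gm_{Y}$ to $Q$ is through $\Gm/\Gm^{p}$, whose global sections are $\Pic(Y)[p]$ rather than being forced to vanish by $F^{\times}=F^{\times p}$.

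The paper's argument bypasses any coherent model of $Q$. It injects $Q$ into the coarser quotient $R^{1}\Psi\mathfrak{T}(1)/U^{(1,0)}R^{1}\Psi\mathfrak{T}(1)$ (dropping the $Z$-filtration) via a direct stalk calculation (Lemma~\ref{0033}), and then observes that this larger sheaf is an extension of a constructible sheaf by $\Gm/\Gm^{p}$ on $Y_{\et}$. Finiteness of $\Gamma(Y,\Gm/\Gm^{p})\cong H^{1}(Y,\Gm)[p]$ finishes the proof. No Frobenius-fixed-point step is invoked here; that machinery enters only later, for Proposition~\ref{0016} and Sections~\ref{0010}--\ref{0037}.
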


\begin{proof}
	It is enough to show that the group
		\[
			\Gamma \left(
				Y,\,
				\frac{
					U^{(0, f_{Z})} R^{1} \Psi \mathfrak{T}(1)
				}{
					U^{(1, f_{Z})} R^{1} \Psi \mathfrak{T}(1)
				}
			\right)
		\]
	is finite.
	Let $U^{(0, 0)} R^{1} \Psi \mathfrak{T}(1)$ be
	the image of $i^{\ast} \Gm$ in $R^{1} \Psi \mathfrak{T}(1)$.
	Consider the natural morphism
		\begin{equation} \label{0032}
				\frac{
					U^{(0, f_{Z})} R^{1} \Psi \mathfrak{T}(1)
				}{
					U^{(1, f_{Z})} R^{1} \Psi \mathfrak{T}(1)
				}
			\to
				\frac{
					U^{(0, 0)} R^{1} \Psi \mathfrak{T}(1)
				}{
					U^{(1, 0)} R^{1} \Psi \mathfrak{T}(1)
				}
		\end{equation}
	of sheaves on $Y_{\et}$.
	For any closed point $x \in Y$,
	the induced morphism on the stalks at $x$ is given by
		\[
					(1 + \ideal{p}_{x}^{f_{\ideal{p}}} B_{x})
				/
					(1 + \ideal{p}_{x}^{f_{\ideal{p}} / p} B_{x})^{p}
			\into
				B_{x}^{\times} / B_{x}^{\times p}.
		\]
	Hence \eqref{0032} is injective
	and the right-hand side of \eqref{0032} is isomorphic to $\Gm / \Gm^{p}$.
	Now the finiteness of the group
		\[
				\Gamma(Y, \Gm / \Gm^{p})
			\cong
				H^{1}(Y, \Gm)[p]
			\cong
				\bigoplus_{i} \Pic(Y_{i})[p]
		\]
	implies the result.
\end{proof}

Thus we need to prove that 
	$
		\Gamma \bigl(
			Y, U^{(1, f_{Z})} R^{1} \Psi \mathfrak{T}(1)
		\bigr)
	$
is finite.

\begin{Prop}
	The $p$-th power map and the natural surjection give an exact sequence
		\begin{equation} \label{0005}
				0
			\to
				i^{\ast}
				\frac{
					1 + I_{Y} I_{Z}^{f_{Z} / p}
				}{
					1 + I_{Y}^{f_{Y} / p} I_{Z}^{f_{Z} / p}
				}
			\stackrel{p}{\to}
				i^{\ast}
				\frac{
					1 + I_{Y} I_{Z}^{f_{Z}}
				}{
					1 + I_{Y}^{f_{Y}} I_{Z}^{f_{Z}}
				}
			\to
				U^{(1, f_{Z})} R^{1} \Psi \mathfrak{T}(1)
			\to
				0
		\end{equation}
	over $Y_{\et}$,
	where $f_{Y} / p$ means $(f_{Y_{1}} / p, \dots, f_{Y_{n}} / p)$
	and $f_{Z} / p$ similarly.
\end{Prop}

\begin{proof}
	We will only prove that $i^{\ast}(1 + I_{Y}^{f_{Y}} I_{Z}^{f_{Z}})$ maps to zero
	in $U^{(1, f_{Z})} R^{1} \Psi \mathfrak{T}(1)$.
	The rest of the claimed exactness can be proven by a similar method.
	
	It is enough to look at the stalk at an arbitrary closed point $x \in Y$.
	The stalk at $x$ of $R^{1} \Psi \mathfrak{T}(1)$ is
	$H^{1}(R_{x}, \mathfrak{T}(1)) \cong R_{x}^{\times} / R_{x}^{\times p}$.
	The sheaves $I_{Y}$ and $I_{Z}$ correspond to ideals of $A_{x}$.
	Hence we need to show that an element of $1 + I_{Y}^{f_{Y}} I_{Z}^{f_{Z}}$
	is a $p$-th power in $R_{x}^{\times}$.
	We will actually prove that it is a $p$-th power of an element of
	$1 + I_{Y}^{f_{Y} / p} I_{Z}^{f_{Z} / p}$.
	By considering a prime factorization of $\zeta_{p} - 1$ in the UFD $A_{x}$,
	we know that the ideal $I_{Y}^{f_{Y} / p} I_{Z}^{f_{Z} / p}$ of $A_{x}$ is
	generated by $\zeta_{p} - 1$.
	Let $a \in A_{x}$ be arbitrary and $z$ an indeterminate.
	We need to show that the equation
	$(1 + (\zeta_{p} - 1) z)^{p} = 1 + (\zeta_{p} - 1)^{p} a$
	has a solution $z \in A_{x}$.
	By the proof of Lemma \ref{0059} and the Henselian property of $A_{x}$,
	this is equivalent to solve $z^{p} - z = \bar{a}$ in $F$.
	As $F$ is algebraically closed, this equation is indeed solvable.
\end{proof}

\begin{Prop} \label{0054}
	Let $c$ be a positive integer.
	For each $i$ and j, set $f_{Y_{i}}^{\ast} = c f_{Y_{i}} / p$,
	$f_{Y}^{\ast} = (f_{Y_{1}}^{\ast}, \dots, f_{Y_{n}}^{\ast})$,
	$f_{Z_{j}}^{\ast} = c f_{Z_{j}} / p$ and
	$f_{Z}^{\ast} = (f_{Z_{1}}^{\ast}, \dots, f_{Z_{l}}^{\ast})$.
	Let $m = (m_{1}, \dots, m_{n})$ be
	an ordered set of non-positive integers
	such that $\sum_{i} m_{i} Y_{i}$ is nef.
	\begin{enumerate}
		\item \label{0055}
			Let $m' = (m_{1}', \dots, m_{n}')$ be an ordered set of integers
			such that $m_{i}' \le m_{i}$ for all $i$.
			Then the sheaf
			$I_{Y}^{f_{Y}^{\ast} + m'} I_{Z}^{f_{Z}^{\ast}} / I_{Y}^{f_{Y}^{\ast} + m} I_{Z}^{f_{Z}^{\ast}}$
			admits a finite filtration
			for which every successive subquotient is supported on $Y_{i}$ for some $i$
			giving a line bundle of negative degree on $Y_{i}$.
		\item \label{0056}
			Assume that $m_{i} \ne 0$ for any $i$.
			Then the sheaf
			$I_{Y}^{f_{Y}^{\ast} + m} I_{Z}^{f_{Z}^{\ast}} / I_{Y}^{f_{Y}^{\ast} + m + 1} I_{Z}^{f_{Z}^{\ast}}$
			admits a finite filtration
			for which every successive subquotient is supported on $Y_{i}$ for some $i$
			giving a line bundle of negative degree on $Y_{i}$.
		\item \label{0057}
			The divisor $- \sum_{i} f_{Y_{i}}^{\ast} Y_{i}$ is nef.
	\end{enumerate}
\end{Prop}

\begin{proof}
	We have $I_{Y}^{e_{Y}} I_{Z}^{e_{Z}} = p \Order_{\mathfrak{X}} \cong \Order_{\mathfrak{X}}$.
	Hence if $m_{i}' = m_{i} - 1$ for exactly one $i = i'$
	and $m_{i}' = m_{i}$ for $i \ne i'$, then the sheaf
	$I_{Y}^{f_{Y}^{\ast} + m'} I_{Z}^{f_{Z}^{\ast}} / I_{Y}^{f_{Y}^{\ast} + m} I_{Z}^{f_{Z}^{\ast}}$
	on $Y_{i'}$ has negative degree if and only if the sheaf
	$I_{Y}^{m'} / I_{Y}^{m}$ on $Y_{i'}$ has negative degree.
	Hence \eqref{0055} and \eqref{0056} follow from
	Proposition \ref{0049}.
	For \eqref{0057}, we have
		\[
				\left(
					- \sum_{i} f_{Y_{i}}^{\ast} Y_{i}
				\right) \cdot Y_{i'}
			=
				\left(
					\sum_{j} f_{Z_{j}}^{\ast} Z_{j}
				\right) \cdot Y_{i'}
			\ge
				0
		\]
	since $Z_{j} \cdot Y_{i'} \ge 0$.
\end{proof}

The following gives a mixed characteristic analogue of the sequence \eqref{0039}:

\begin{Prop} \label{0016}
	We have
		\[
				\Gamma \left(
					\mathfrak{X},\,
					\frac{
						1 + I_{Y} I_{Z}^{f_{Z}}
					}{
						1 + I_{Y}^{f_{Y}} I_{Z}^{f_{Z}}
					}
				\right)
			=
				0.
		\]
	In particular, the sequence \eqref{0005} induces an exact sequence
		\[
				0
			\to
				\Gamma \bigl(
					Y,
					U^{(1, f_{Z})} R^{1} \Psi \mathfrak{T}(1)
				\bigr)
			\to
				H^{1} \left(
					\mathfrak{X},\,
					\frac{
						1 + I_{Y} I_{Z}^{f_{Z} / p}
					}{
						1 + I_{Y}^{f_{Y} / p} I_{Z}^{f_{Z} / p}
					}
				\right)
			\stackrel{p}{\to}
				H^{1} \left(
					\mathfrak{X},\,
					\frac{
						1 + I_{Y} I_{Z}^{f_{Z}}
					}{
						1 + I_{Y}^{f_{Y}} I_{Z}^{f_{Z}}
					}
				\right)
		\]
\end{Prop}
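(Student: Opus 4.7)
The statement has two parts, and the exact sequence follows formally from the long exact cohomology sequence attached to \eqref{0005} once the vanishing is known. Writing $A$ and $B$ for the first two sheaves in \eqref{0005}, the long exact sequence begins
\[
    0 \to \Gamma(\mathfrak{X}, A) \stackrel{p}{\to} \Gamma(\mathfrak{X}, B) \to \Gamma\bigl(Y, U^{(1, f_Z)} R^{1} \Psi \mathfrak{T}(1)\bigr) \to H^{1}(\mathfrak{X}, A) \stackrel{p}{\to} H^{1}(\mathfrak{X}, B),
\]
so once I show $\Gamma(\mathfrak{X}, B) = 0$, the subgroup $\Gamma(\mathfrak{X}, A)$ also vanishes and the displayed four-term sequence drops out. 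Note that both numerator and denominator of $B$ agree with $1$ outside $Y$, so $B$ is supported on $Y$, and cohomology over $\mathfrak{X}$ coincides with cohomology over $Y$.

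The plan for the vanishing is to filter $B$ so that each graded piece matches an additive sheaf already controlled by Proposition \ref{0038}. Iterating Proposition \ref{0006} with $c = p$ (so that $f_Y^{\ast} = f_Y$ and $f_Z^{\ast} = f_Z$), I choose a chain of ordered tuples $1 = m^{(0)} \le m^{(1)} \le \dots \le m^{(N)} = f_Y$ in which each step increments exactly one coordinate of $m^{(k)}$ (say the $i_k$-th) by $1$, such that the coherent subquotient $I_Y^{m^{(k)}} I_Z^{f_Z}/I_Y^{m^{(k+1)}} I_Z^{f_Z}$ is a line bundle of negative degree on $Y_{i_k}$. The corresponding decreasing filtration $B^{(k)} := (1 + I_Y^{m^{(k)}} I_Z^{f_Z})/(1 + I_Y^{f_Y} I_Z^{f_Z})$ of $B = B^{(0)}$ then has successive quotients $B^{(k)}/B^{(k+1)} \cong (1 + I_Y^{m^{(k)}} I_Z^{f_Z})/(1 + I_Y^{m^{(k+1)}} I_Z^{f_Z})$.

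The key step is that $1 + x \mapsto x$ defines an isomorphism of sheaves of abelian groups from this multiplicative graded piece onto the coherent graded piece $I_Y^{m^{(k)}} I_Z^{f_Z}/I_Y^{m^{(k+1)}} I_Z^{f_Z}$. Indeed, for $x, y \in I_Y^{m^{(k)}} I_Z^{f_Z}$, the cross term $xy$ lies in $I_Y^{2 m^{(k)}} I_Z^{2 f_Z}$, which is contained in $I_Y^{m^{(k+1)}} I_Z^{f_Z}$ because $m^{(k+1)}$ exceeds $m^{(k)}$ only by $1$ in the $i_k$-th coordinate and $m^{(k)}_{i_k} \ge 1$. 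Each graded piece thereby identifies with a line bundle of negative degree on the proper integral curve $Y_{i_k}$, which has no nonzero global section, so $\Gamma(Y, B^{(k)}/B^{(k+1)}) = 0$ for every $k$, and induction gives $\Gamma(Y, B) = 0$.

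I do not expect a real obstacle. The only delicate point is the $m \ge 1$ condition needed to identify the multiplicative and additive graded pieces, which is arranged automatically by starting the filtration at $m^{(0)} = 1$; everything else is bookkeeping with componentwise inequalities, the long exact cohomology sequence of \eqref{0005}, and the standard vanishing of $H^0$ for negative line bundles on proper integral curves.
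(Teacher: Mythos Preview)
Your proposal is correct and follows essentially the same approach as the paper: both filter the sheaf via Proposition \ref{0038} (equivalently, iterated use of Proposition \ref{0006} with $c = p$) so that the successive quotients are negative-degree line bundles on the $Y_i$, whence $\Gamma$ vanishes and the exact sequence follows from the long exact sequence of \eqref{0005}. Your write-up is more explicit than the paper's, in particular the verification that $1 + x \mapsto x$ identifies the multiplicative and additive graded pieces (via $xy \in I_Y^{2m^{(k)}} I_Z^{2f_Z} \subset I_Y^{m^{(k+1)}} I_Z^{f_Z}$), which the paper leaves implicit.
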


\begin{proof}
	By Proposition \ref{0054} \eqref{0055}, the sheaf
	$I_{Y} I_{Z}^{f_{Z}} / I_{Y}^{f_{Y}} I_{Z}^{f_{Z}}$
	admits a finite filtration whose successive subquotients are
	line bundles of negative degree on some of $Y_{1}, \dots, Y_{n}$.
	Hence $i^{\ast} (1 + I_{Y} I_{Z}^{f_{Z}}) / (1 + I_{Y}^{f_{Y}} I_{Z}^{f_{Z}})$
	admits a finite filtration whose successive subquotients are
	line bundles of negative degree on some of $Y_{1}, \dots, Y_{n}$.
	Hence its global section module is zero.
\end{proof}

We want to prove that the map $p$ in the proposition has finite kernel.
The difference between this sequence and \eqref{0039}
is that the cohomology groups are no longer $F$-vector spaces,
not even killed by $p$.
Note, however, that $(1 + a)^{p} = 1 + p a + \dots + a^{p}$,
so the map $p$ is not very different from $\Frob - 1$.

The strategy is to give some algebraic group structures (instead of vector space structures) on the $H^{1}$
and show that the map $p$ on their Lie algebras is injective
and hence $p$ itself has finite \'etale kernel.
This map on the Lie algebras should be something like
	\begin{equation} \label{0040}
			H^{1} \left(
				\mathfrak{X},\,
				\frac{
					I_{Y} I_{Z}^{f_{Z} / p}
				}{
					I_{Y}^{f_{Y} / p} I_{Z}^{f_{Z} / p}
				}
			\right)
		\stackrel{p}{\to}
			H^{1} \left(
				\mathfrak{X},\,
				\frac{
					I_{Y} I_{Z}^{f_{Z}}
				}{
					I_{Y}^{f_{Y}} I_{Z}^{f_{Z}}
				}
			\right),
	\end{equation}
where this $p$ is induced by multiplication by $p$ on the coefficient sheaves.
While we can see using Proposition \ref{0054} that the map \eqref{0040} is indeed injective,
it cannot be a map between Lie algebras of some algebraic groups over $F$
since the groups in \eqref{0040} are not killed by $p$.
It turns out that some twists are necessary
and we need to kill some ``junk'' infinitesimal group schemes
that unnecessarily fatten up the Lie algebras.
We will carry out this strategy in the subsequent sections.


\section{Lie algebras of deformation cohomology I}
\label{0010}

We will use the methods of \cite{Lip76}
to treat the type of cohomology appearing in Proposition \ref{0016}.
We first treat an algebraic group structure on the latter group
$H^{1} \bigl( \mathfrak{X}, (1 + I_{Y} I_{Z}^{f_{Z}}) / (1 + I_{Y}^{f_{Y}} I_{Z}^{f_{Z}}) \bigr)$.
The former group is treated in the next section.
Some basic references for commutative algebraic groups as fppf sheaves
are \cite{DG70DemaGab}, \cite{DG70SGA} and \cite{Oor66}.

We need some notation.
For a commutative associative nilpotent ring $I$ without unity
(nilpotent means that any element $a$ satisfies $a^{n} = 0$ for some $n = n(a)$),
define a group $1 + I$ to be the set $I$ with new group structure given by
$a \cdot b = a + b + a b$.
An element of $I$ viewed as an element of this $1 + I$ is denoted by $1 + a$.
Note that if $I^{2} = 0$ (meaning $a b = 0$ for all $a, b \in I$),
then $1 + I$ is isomorphic to the additive group of $I$.

For $N \ge 0$, let $W_{N}(F)$ be the ring of $p$-typical Witt vectors of length $N$.
For the associative $W_{N}(F)$-algebras without unity below,
we assume that $1 \in W_{N}(F)$ acts by multiplication by the identity map.
The ring $A$ has a canonical structure as a $W(F) = \invlim_{N} W_{N}(F)$-algebra
(\cite[Chapter V, Section 4, Theorem 2.1]{DG70DemaGab}).
Hence $\mathfrak{X}$ is naturally a $W(F)$-scheme.

In this paper, the fppf site $\Spec F_{\fppf}$ of $F$ is
(the opposite of) the category of $F$-algebras
endowed with the (pre)topology where a covering of an $F$-algebra $R$ is
a finite family $\{R \to R_{i}\}_{i}$ of flat morphisms of finite presentation
such that $R \to \prod R_{i}$ is faithfully flat.
This is called the big affine fppf site of $\Spec F$ in \cite[Tag 021S]{Sta22}.
A sheaf on it is simply called a ``sheaf''
in \cite[Chapter III, Section 1, No.\ 1.2]{DG70DemaGab}.
The affineness is just for simplicity:
its topos is equivalent to the topos of the ``usual'' fppf site of $F$,
which is the category of $F$-schemes
endowed with the (pre)topology where a covering of an $F$-scheme $S$ is
a family $\{S_{\lambda} \to S\}_{\lambda}$ of flat morphisms locally of finite presentation
such that $\bigsqcup S_{\lambda} \to S$ is surjective (\cite[Tag 021V]{Sta22}).
As an intermediate generalization,
one may also restrict the schemes to be separated,
which again results in an equivalent topos.

For integers $q, N \ge 0$,
a sheaf $I$ of commutative associative nilpotent $W_{N}(F)$-algebras without unity over $Y_{\et}$
and a sheaf $J$ of commutative associative $W_{N}(F)$-algebras without unity over $F_{\fppf}$,
define a sheaf $\alg{H}^{q}(1 + I \tensor_{W_{N}(F)} J)$ on $F_{\fppf}$
by the fppf sheafification of the presheaf
that sends an $F$-algebra $R$ to
	\begin{equation} \label{0060}
		H^{q} \bigl(
			Y,
			1 + I \tensor_{W_{N}(F)} J(R)
		\bigr),
	\end{equation}
where $I \tensor_{W_{N}(F)} J(R)$ is the tensor product of the sheaf $I$
with the constant sheaf $J(R)$.

The particular case
$\alg{H}^{q}(1 + I \tensor_{W_{N}(F)} W_{N})$ is independent of the choice of $N$:

\begin{Prop} \label{0063}
	Assume that $I$ satisfies $p^{N'} I = 0$ for $N' \le N$.
	Then the natural reduction morphism from
	$\alg{H}^{q}(1 + I \tensor_{W_{N}(F)} W_{N})$
	to $\alg{H}^{q}(1 + I \tensor_{W_{N'}(F)} W_{N'})$ is an isomorphism.
\end{Prop}

\begin{proof}
	The functor sending an $F$-algebra $R$ to
	the sheaf $1 + I \tensor_{W_{N}(F)} W_{N}(R)$ on $Y_{\et}$
	commutes with filtered direct limits.
	Since the \'etale cohomology functor $H^{q}(Y, \var)$ commutes
	with filtered direct limits (\cite[Tag 03Q5]{Sta22}),
	it follows that the sheaf $\alg{H}^{q}(1 + I \tensor_{W_{N}(F)} W_{N})$
	as a functor in $F$-algebras commutes with filtered direct limits.
	Therefore we may restrict the sheaves to the category of $F$-algebras of finite type
	for proving the proposition.
	Let $R$ be an fppf-local $F$-algebra (not necessarily of finite type),
	namely an $F$-algebra such that
	any faithfully flat $R$-algebra of finite presentation admits a retract
	(\cite[Definition 0.1]{GK15}).
	By what we saw above and \cite[Theorem 0.2]{GK15},%
	\footnote{
		In this reference, the underlying category of the site is
		the category of separated $F$-schemes of finite type.
		Separated may be replaced by affine as noted before.
	}
	it is enough to show that the morphism in question is an isomorphism on $R$-valued points.
	Since $R$ is fppf-local, the fppf sheafification is not needed on $R$-valued points, so
		\begin{gather*}
					\alg{H}^{q}(1 + I \tensor_{W_{N}(F)} W_{N})(R)
				\cong
					H^{q} \bigl(
						Y,
						1 + I \tensor_{W_{N}(F)} W_{N}(R)
					\bigr),
			\\
					\alg{H}^{q}(1 + I \tensor_{W_{N'}(F)} W_{N'})(R)
				\cong
					H^{q} \bigl(
						Y,
						1 + I \tensor_{W_{N'}(F)} W_{N'}(R)
					\bigr).
		\end{gather*}
	The natural map $W_{N'}(F) \tensor_{W_{N}(F)} W_{N}(R) \to W_{N'}(R)$
	is an isomorphism (again by $R$ being fppf-local).
	Hence
		\begin{align*}
					I \tensor_{W_{N}(F)} W_{N}(R)
			&	\cong
					I \tensor_{W_{N'}(F)} \bigl(
						W_{N'}(F) \tensor_{W_{N}(F)} W_{N}(R)
					\bigr)
			\\
			&	\cong
					I \tensor_{W_{N'}(F)} W_{N'}(R)
		\end{align*}
	over $Y_{\et}$.
\end{proof}

Note also that if $I^{2} = 0$ and $p I = p J = 0$,
then $\alg{H}^{q}(1 + I \tensor_{W_{N}(F)} J)$ is isomorphic to
$H^{q}(Y, I) \tensor_{F} J$,
where $H^{q}(Y, I)$ is viewed as a constant sheaf on $F_{\fppf}$.

Let $\Frob_{F} \colon \Ga \to \Ga$ be the relative Frobenius morphism over $F$.
Let $\alpha_{p}$ be its kernel.
For an $F$-algebra $R$, set $\Ga'(R) = W_{2}(R) / p(W_{2}(R))$
and $\alpha_{p}'(R) = W_{2}(R)[p] / p(W_{2}(R))$.
They fppf-sheafify to $\Ga$ and $\alpha_{p}$.
For an $F$-vector space $V$, let $V^{(p)} = V \tensor_{F} F$,
where the right tensor factor is the $p$-th power map $F \to F$ viewed as an $F$-algebra.
In other words, $V^{(p)}$ is $V$ with new $F$-action given by $a \cdot v = a^{1 / p} v$.
We have an exact sequence
	\begin{equation} \label{0061}
			0
		\to
			V \tensor_{F} \alpha_{p}
		\to
			V \tensor_{F} \Ga
		\stackrel{\id_{V} \tensor \Frob_{F}}{\to}
			V^{(p)} \tensor_{F} \Ga
		\to
			0
	\end{equation}
over $F_{\fppf}$.

For ordered sets of positive integers $m = (m_{1}, \dots, m_{n})$,
$m' = (m_{1}', \dots, m_{n}')$,
$k = (k_{1}, \dots, k_{l})$ and
$k' = (k_{1}', \dots, k_{l}')$ with
$m_{i} \le m_{i}'$ and $k_{j} \le k_{j}'$ for all $i$ and $j$,
we denote $I^{m, k}_{m', k'} = I_{Y}^{m} I_{Z}^{k} / I_{Y}^{m'} I_{Z}^{k'}$.
We have $(1 + I_{Y}^{m} I_{Z}^{k}) / (1+ I_{Y}^{m'} I_{Z}^{k'}) \cong 1 + I^{m, k}_{m', k'}$.
When $k = k'$, the sheaf $I^{m, k}_{m', k}$ is supported on $Y$,
so we view it as a sheaf on $Y_{\et}$.

With this language, we view the sheaf
	\[
		\alg{H}^{1} \bigl(
				1
			+
				I^{1, f_{Z}}_{f_{Y}, f_{Z}}
			\tensor_{W_{2}(F)}
				W_{2}
		\bigr)
	\]
as our algebraic structure on $H^{1}(Y, 1 + I^{1, f_{Z}}_{f_{Y}, f_{Z}})$.
To analyze it, we begin with a lemma:

\begin{Lem} \label{0007}
	Let $0 \to M_{1} \to M_{2} \to M_{3} \to 0$ be an exact sequence of $W_{2}(F)$-modules
	such that $p M_{1} = p M_{3} = 0$.
	Let $N$ be a $W_{2}(F)$-module.
	Then the kernel of the natural map
	$M_{1} \tensor_{W_{2}(F)} N \to M_{2} \tensor_{W_{2}(F)} N$
	is given by the submodule $(p M_{2}) \tensor_{F} (N[p] / p N)$.
	In particular, we have an exact sequence
		\[
				0
			\to
				(p M_{2}) \tensor_{F} (N[p] / p N)
			\to
				M_{1} \tensor_{W_{2}(F)} N
			\to
				M_{2} \tensor_{W_{2}(F)} N
			\to
				M_{3} \tensor_{W_{2}(F)} N
			\to
				0.
		\]
\end{Lem}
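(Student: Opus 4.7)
The key observation is that $p^{2} = 0$ in $W_{2}(F)$: writing $p = V(1)$, one has $p^{2} = V F V(1) = V(p \cdot 1_{W_{1}(F)}) = V(0) = 0$, using $FV = p$ on Witt vectors and $p = 0$ in $W_{1}(F) = F$. Consequently, for any $W_{2}(F)$-module $N$ one has $p N \subseteq N[p]$, so the quotient $N[p]/pN$ is a well-defined $F$-vector space; likewise the hypotheses $p M_{1} = p M_{3} = 0$ force $p M_{2} \subseteq M_{1} \subseteq M_{2}[p]$, both $p M_{2}$ and $M_{1}$ being $F$-vector spaces. My strategy is to apply the long exact $\Tor_{\bullet}^{W_{2}(F)}(-, N)$-sequence of the given short exact sequence and identify the kernel in question with the image of the relevant connecting map.

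Since $p M_{3} = 0$, the module $M_{3}$ is naturally an $F$-vector space. The $p$-periodic $W_{2}(F)$-free resolution
\[
	\cdots \xrightarrow{\cdot p} W_{2}(F) \xrightarrow{\cdot p} W_{2}(F) \xrightarrow{\cdot p} W_{2}(F) \to F \to 0
\]
(exact by $p W_{2}(F) = \Ker(\cdot p)$, a restatement of $p^{2} = 0$) yields $\Tor_{1}^{W_{2}(F)}(F, N) = N[p]/pN$, and summing over a basis of $M_{3}$ gives $\Tor_{1}^{W_{2}(F)}(M_{3}, N) = M_{3} \otimes_{F} (N[p]/pN)$. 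Similarly $M_{i} \otimes_{W_{2}(F)} N = M_{i} \otimes_{F} (N/pN)$ for $i \in \{1,3\}$, since $p M_{i} = 0$. The Tor long exact sequence therefore takes the form
\[
	M_{3} \otimes_{F} (N[p]/pN) \xrightarrow{\delta} M_{1} \otimes_{F} (N/pN) \to M_{2} \otimes_{W_{2}(F)} N \to M_{3} \otimes_{F} (N/pN) \to 0,
\]
and the desired kernel coincides with the image of $\delta$.

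Next, I would make $\delta$ explicit by lifting the above resolution of $M_{3}$ to a $W_{2}(F)$-free resolution of $M_{2}$ via the horseshoe lemma, choosing lifts to $M_{2}$ of an $F$-basis of $M_{3}$. A routine chase then yields the formula $\delta(m_{3} \otimes \bar{n}) = p m_{2} \otimes n$, where $m_{2} \in M_{2}$ is any lift of $m_{3}$ and $n \in N[p]$ any representative of $\bar{n}$; well-definedness follows from $p M_{1} = 0$ and $p^{2} = 0$. Consequently, $\delta$ factors as the surjection $M_{3} \otimes_{F} (N[p]/pN) \twoheadrightarrow (p M_{2}) \otimes_{F} (N[p]/pN)$ induced by $m_{3} \mapsto p m_{2}$, followed by the map $(p M_{2}) \otimes_{F} (N[p]/pN) \to M_{1} \otimes_{F} (N/pN)$ induced by the $F$-linear inclusions $p M_{2} \hookrightarrow M_{1}$ and $N[p]/pN \hookrightarrow N/pN$; because $F$ is a field, this latter map is injective as the tensor product over $F$ of two inclusions of $F$-vector spaces. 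This identifies the image of $\delta$ with the submodule $(p M_{2}) \otimes_{F} (N[p]/pN)$ of $M_{1} \otimes_{W_{2}(F)} N$, and the four-term exact sequence in the conclusion follows. The only mildly delicate step, which I expect to be the main obstacle, is the explicit formula for $\delta$: this is a standard horseshoe-lemma chase, but the lifting choices must be tracked carefully.
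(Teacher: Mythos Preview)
Your argument is correct. The paper's own proof consists of the single word ``Elementary,'' so there is nothing to compare against; your $\Tor$-based computation is a clean and standard way to make that elementary claim precise, and the identification of the connecting map $\delta$ with (up to sign) $m_{3}\otimes\bar n\mapsto p m_{2}\otimes n$ is exactly right.

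One small remark: the exactness of your periodic resolution, i.e.\ the equality $pW_{2}(F)=\Ker(\cdot p)$, is not merely a restatement of $p^{2}=0$ (which only gives the inclusion $\subseteq$); the reverse inclusion uses that every element of $F$ is a $p$-th power. This holds here because $F$ is algebraically closed (hence perfect), so the point is harmless in context, but you may want to adjust the parenthetical justification.
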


\begin{proof}
	Everything commutes with filtered direct limits in $N$.
	Hence we may assume that $N$ is finite over $W_{2}(F)$
	and consequently that $N$ is either $W_{2}(F)$ or $F$.
	A direct calculation in each case gives the result.
\end{proof}

Consider the exact sequence
	\[
			0
		\to
			I^{e_{Y} + 1, f_{Z}}_{f_{Y}, f_{Z}}
		\to
			I^{1, f_{Z}}_{f_{Y}, f_{Z}}
		\to
			I^{1, f_{Z}}_{e_{Y} + 1, f_{Z}}
		\to
			0.
	\]
It is an exact sequence of sheaves of $W_{2}(F)$-modules over $Y_{\et}$,
with the first and third terms killed by $p$.
For an $F$-algebra $R$,
applying Lemma \ref{0007} to this sequence and $W_{2}(R)$,
we obtain an exact sequence
	\begin{align*}
		&
				0
			\to
					I^{e_{Y} + 1, e_{Z} + f_{Z}}_{f_{Y}, e_{Z} + f_{Z}}
				\tensor_{F}
					\alpha_{p}'(R)
			\to
					I^{e_{Y} + 1, f_{Z}}_{f_{Y}, f_{Z}}
				\tensor_{F}
					\Ga'(R)
		\\
		&	\to
					I^{1, f_{Z}}_{f_{Y}, f_{Z}}
				\tensor_{W_{2}(F)}
					W_{2}(R)
			\to
					I^{1, f_{Z}}_{e_{Y} + 1, f_{Z}}
				\tensor_{F}
					\Ga'(R)
			\to
				0.
	\end{align*}
This induces an exact sequence
	\begin{equation} \label{0008}
		\begin{aligned}
			&
					0
				\to
						I^{e_{Y} + 1, e_{Z} + f_{Z}}_{f_{Y}, e_{Z} + f_{Z}}
					\tensor_{F}
						\alpha_{p}'(R)
				\to
						I^{e_{Y} + 1, f_{Z}}_{f_{Y}, f_{Z}}
					\tensor_{F}
						\Ga'(R)
			\\
			&	\to
							1
					+
						I^{1, f_{Z}}_{f_{Y}, f_{Z}}
					\tensor_{W_{2}(F)}
						W_{2}(R)
				\to
						1
					+
						I^{1, f_{Z}}_{e_{Y} + 1, f_{Z}}
					\tensor_{F}
						\Ga'(R)
				\to
					0
		\end{aligned}
	\end{equation}
of abelian sheaves on $Y_{\et}$.
Proposition \ref{0054} \eqref{0057} shows that
$- \sum_{i} (f_{Y_{i}} / p) Y_{i}$ is nef.
Hence using Proposition \ref{0054} \eqref{0055} and \eqref{0056}
with $c = p$ and $m = - f_{Y} / p$,
we have $\Gamma(Y, I^{1, f_{Z}}_{e_{Y} + 1, f_{Z}}) = 0$
and hence
	\[
			\Gamma(Y, 1 + I^{1, f_{Z}}_{e_{Y} + 1, f_{Z}} \tensor_{F} \Ga'(R))
		\cong
			1 + \Gamma(Y, I^{1, f_{Z}}_{e_{Y} + 1, f_{Z}}) \tensor_{F} \Ga'(R)
		=
			0.
	\]
Also, the cokernel of the inclusion
	$
			I^{e_{Y} + 1, e_{Z} + f_{Z}}_{f_{Y}, e_{Z} + f_{Z}}
		\into
			I^{e_{Y} + 1, f_{Z}}_{f_{Y}, f_{Z}}
	$
is a skyscraper sheaf
and hence has trivial $H^{1}$.
Therefore the sequence \eqref{0008} induces an exact sequence
	\begin{align*}
		&
				0
			\to
					H^{1}(Y, I^{e_{Y} + 1, f_{Z}}_{f_{Y}, f_{Z}})
				\tensor_{F}
					\alpha_{p}
			\to
					H^{1}(Y, I^{e_{Y} + 1, f_{Z}}_{f_{Y}, f_{Z}})
				\tensor_{F}
					\Ga
		\\
		&	\to
				\alg{H}^{1} \bigl(
						1
					+
						I^{1, f_{Z}}_{f_{Y}, f_{Z}}
					\tensor_{W_{2}(F)}
						W_{2}
				\bigr)
			\to
				\alg{H}^{1} \bigl(
						1
					+
						I^{1, f_{Z}}_{e_{Y} + 1, f_{Z}}
					\tensor_{F}
						\Ga
				\bigr)
			\to
				0
	\end{align*}
over $F_{\fppf}$.
Using the exact sequence \eqref{0061}
with $V = H^{1}(Y, I^{e_{Y} + 1, f_{Z}}_{f_{Y}, f_{Z}})$,
we thus get an exact sequence
	\[
			0
		\to
				H^{1}(Y, I^{e_{Y} + 1, f_{Z}}_{f_{Y}, f_{Z}})^{(p)}
			\tensor_{F}
				\Ga
		\to
			\alg{H}^{1} \bigl(
					1
				+
					I^{1, f_{Z}}_{f_{Y}, f_{Z}}
				\tensor_{W_{2}(F)}
					W_{2}
			\bigr)
		\to
			\alg{H}^{1} \bigl(
					1
				+
					I^{1, f_{Z}}_{e_{Y} + 1, f_{Z}}
				\tensor_{F}
					\Ga
			\bigr)
		\to
			0.
	\]
The third term
	$
		\alg{H}^{1} \bigl(
				1
			+
				I^{1, f_{Z}}_{e_{Y} + 1, f_{Z}}
			\tensor_{F}
				\Ga
		\bigr)
	$
is represented by a unipotent algebraic group scheme over $F$ with
	\[
			\alg{H}^{1} \bigl(
					1
				+
					I^{1, f_{Z}}_{e_{Y} + 1, f_{Z}}
				\tensor_{F}
					\Ga
			\bigr)(R)
		\cong
			H^{1}(Y, 1 + I^{1, f_{Z}}_{e_{Y} + 1, f_{Z}} \tensor_{F} R)
	\]
for all $F$-algebras $R$
(use the filtrations in Proposition \ref{0054} \eqref{0055} and \eqref{0056}
to reduce the statement to what is explained right after Proposition \ref{0063}).
In particular, we obtain:

\begin{Prop}
	The sheaf
		$
			\alg{H}^{1} \bigl(
					1
				+
					I^{1, f_{Z}}_{f_{Y}, f_{Z}}
				\tensor_{W_{2}(F)}
					W_{2}
			\bigr)
		$
	is represented by a unipotent algebraic group scheme over $F$.
\end{Prop}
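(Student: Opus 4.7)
My plan is to read the statement off from the exact sequence of fppf sheaves over $F$
\[
	0
	\to
	H^{1}(Y, I^{e_{Y} + 1, f_{Z}}_{f_{Y}, f_{Z}})_{(p)} \tensor_{F} \Ga
	\to
	\alg{H}^{1}\bigl(1 + I^{1, f_{Z}}_{f_{Y}, f_{Z}} \tensor_{W_{2}(F)} W_{2}\bigr)
	\to
	\alg{H}^{1}\bigl(1 + I^{1, f_{Z}}_{e_{Y} + 1, f_{Z}} \tensor_{F} \Ga\bigr)
	\to
	0
\]
constructed just above, identify both outer terms as unipotent algebraic group schemes over $F$, and then invoke the fact that this class is closed under fppf extensions inside commutative group sheaves. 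The left-hand term is immediate: $Y$ is proper over $F$ and $I^{e_{Y} + 1, f_{Z}}_{f_{Y}, f_{Z}}$ is a coherent $\Order_{Y}$-module (filtered by negative-degree line bundles on the $Y_{i}$ via Proposition~\ref{0038}), hence has finite-dimensional $H^{1}$; its Frobenius twist tensored with $\Ga$ is a vector group.

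For the right-hand term I would filter the nilpotent non-unital sheaf of rings $I^{1, f_{Z}}_{e_{Y} + 1, f_{Z}}$ by the ideals $I^{a, f_{Z}}_{e_{Y} + 1, f_{Z}}$ for $a = 1, \dots, e_{Y} + 1$. On each successive subquotient $I^{a, f_{Z}}_{a + 1, f_{Z}}$ the product is zero, so the group sheaf $1 + I^{a, f_{Z}}_{a + 1, f_{Z}} \tensor_{F} \Ga$ collapses to the additive group of the coherent $\Order_{Y}$-module $I^{a, f_{Z}}_{a + 1, f_{Z}} \tensor_{F} \Ga$. Flat base change for coherent cohomology along $F \to R$ then identifies the corresponding $\alg{H}^{1}$ with $H^{1}(Y, I^{a, f_{Z}}_{a + 1, f_{Z}}) \tensor_{F} \Ga$, a vector group. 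Ascending the filtration step by step via the same extension-closure property assembles $\alg{H}^{1}\bigl(1 + I^{1, f_{Z}}_{e_{Y} + 1, f_{Z}} \tensor_{F} \Ga\bigr)$ as a unipotent algebraic group, and combining with the left-hand term yields the proposition.

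The main hazard I foresee is maintaining representability while ascending the filtration for the right-hand term: at each step one must control the cokernel of the connecting map into the next $\alg{H}^{2}$, which requires in particular that the relevant $\alg{H}^{2}$ sheaves themselves be vector groups. This should follow from the same flat base change together with the finite-dimensionality of $H^{2}(Y, -)$ on the two-dimensional proper $F$-scheme $Y$, so the entire inductive construction can be carried out inside the class of unipotent algebraic groups.
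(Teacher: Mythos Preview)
Your approach matches the paper's exactly: the proposition is read off from the displayed short exact sequence, using that the outer terms are unipotent algebraic and that this class is closed under extensions of fppf group sheaves. The paper simply asserts that the right-hand term $\alg{H}^{1}\bigl(1 + I^{1, f_{Z}}_{e_{Y} + 1, f_{Z}} \tensor_{F} \Ga\bigr)$ is represented by a unipotent algebraic group and moves on; your filtration argument is supplying the detail the paper omits.

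Two small corrections. First, $Y$ is one-dimensional, not two: it is the exceptional divisor of a resolution of a surface singularity, hence a union of the curves $Y_{i}$, and is covered by two affines (Lemma~\ref{0019}). Consequently $H^{2}(Y,\mathcal{F}) = 0$ for any sheaf that is acyclic on affines, and your ``main hazard'' disappears entirely: at every step of the filtration the long exact sequence terminates at $H^{1}$, so the short exact sequences of $\alg{H}^{1}$'s are already exact on the right with no connecting map into $\alg{H}^{2}$ to control.

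Second, $e_{Y} = (e_{Y_{1}},\dots,e_{Y_{n}})$ is a tuple, so the phrase ``$a = 1,\dots,e_{Y}+1$'' is ambiguous once the $e_{Y_{i}}$ differ: for a diagonal index $a$ with $\min_{i} e_{Y_{i}} + 1 < a \le \max_{i} e_{Y_{i}} + 1$ the inclusion $I_{Y}^{e_{Y}+1} I_{Z}^{f_{Z}} \subset I_{Y}^{a} I_{Z}^{f_{Z}}$ fails and the quotient $I^{a,f_{Z}}_{e_{Y}+1,f_{Z}}$ is not defined. The fix is to refine by multi-indices, raising one coordinate at a time as in Proposition~\ref{0006}; each successive subquotient is still a line bundle on some $Y_{i'}$ (in particular square-zero), and your inductive argument goes through unchanged.
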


Taking the Lie algebras, we obtain an exact sequence
	\begin{equation} \label{0009}
			0
		\to
			H^{1}(Y, I^{e_{Y} + 1, f_{Z}}_{f_{Y}, f_{Z}})^{(p)}
		\to
			\Lie
			\alg{H}^{1} \bigl(
					1
				+
					I^{1, f_{Z}}_{f_{Y}, f_{Z}}
				\tensor_{W_{2}(F)}
					W_{2}
			\bigr)
		\to
			H^{1}(Y, I^{1, f_{Z}}_{e_{Y} + 1, f_{Z}})
		\to
			0
	\end{equation}
of $F$-vector spaces.
This sequence canonically splits:
Consider the natural map
	\[
			H^{1} \bigl(
				Y,
					1
				+
					I^{1, f_{Z}}_{f_{Y}, f_{Z}}
				\tensor_{W_{2}(F)}
					W_{2}(F[\varepsilon])
			\bigr)
		\to
			\alg{H}^{1} \bigl(
					1
				+
					I^{1, f_{Z}}_{f_{Y}, f_{Z}}
				\tensor_{W_{2}(F)}
					W_{2}
			\bigr)(F[\varepsilon]),
	\]
where $F[\varepsilon] \cong F[x] / (x^{2})$.
Since $W_{2}(F[\varepsilon]) \cong W_{2}(F) \oplus F (\varepsilon, 0) \oplus F (0, \varepsilon)$
as $W_{2}(F)$-modules,
the middle summand $F (\varepsilon, 0)$ gives an $F$-linear map
	\[
			H^{1}(Y, I^{1, f_{Z}}_{f_{Y}, f_{Z}} \tensor_{W_{2}(F)} F)
		\to
			\Lie
			\alg{H}^{1} \bigl(
					1
				+
					I^{1, f_{Z}}_{f_{Y}, f_{Z}}
				\tensor_{W_{2}(F)}
					W_{2}
			\bigr).
	\]
The left-hand side is isomorphic to
$H^{1}(Y, I^{1, f_{Z}}_{e_{Y} + 1, f_{Z}})$,
since the image of multiplication by $p$ on $I^{1, f_{Z}}_{f_{Y}, f_{Z}}$ is
$I^{e_{Y} + 1, e_{Z} + f_{Z}}_{f_{Y}, e_{Z} + f_{Z}}$,
which injects into $I^{e_{Y} + 1, f_{Z}}_{f_{Y}, f_{Z}}$
with skyscraper cokernel as noted before.
The resulting $F$-linear map from $H^{1}(Y, I^{1, f_{Z}}_{e_{Y} + 1, f_{Z}})$
gives the desired splitting.

We describe the first map in \eqref{0009}.
First note that all the sheaves in \eqref{0008} have
trivial cohomology in positive degrees over any affine scheme \'etale over $Y$.
Also, $Y$ can be covered by two affine opens.
We use the following to describe $H^{1}$:

\begin{Lem} \label{0019}
	Let $Y = U \cup V$ be an affine open cover.
	Let $G \in \Ab(Y_{\et})$ be a sheaf such that
	$H^{1}(U, G) = H^{1}(V, G) = 0$.
	Then we have an exact sequence
		\[
				0
			\to
				G(Y)
			\to
				G(U) \oplus G(V)
			\to
				G(U \cap V)
			\to
				H^{1}(Y, G)
			\to
				0.
		\]
\end{Lem}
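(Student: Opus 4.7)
The statement to prove is a truncation of the Mayer-Vietoris long exact sequence for the Zariski cover $Y = U \cup V$, with the $H^{1}$-vanishing hypothesis cutting it off. My plan is to derive it from the \v Cech-to-derived-functor spectral sequence for \'etale cohomology.

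I would invoke the spectral sequence
$$E_{2}^{p,q} = \check{H}^{p}(\{U, V\}; \mathcal{H}^{q}(G)) \Rightarrow H^{p+q}(Y, G),$$
where $\mathcal{H}^{q}(G)$ denotes the presheaf $W \mapsto H^{q}(W, G)$. Since $\{U, V\}$ is a two-element cover and the inclusions $U, V \into Y$ are monomorphisms (so $U \times_{Y} U = U$ and $V \times_{Y} V = V$), the alternating \v Cech complex lives only in degrees $0$ and $1$, forcing $E_{2}^{p,q} = 0$ for $p \ge 2$. The vanishing hypothesis $H^{1}(U, G) = H^{1}(V, G) = 0$ immediately kills $E_{2}^{0,1}$, since the latter is a subgroup of $H^{1}(U, G) \oplus H^{1}(V, G)$. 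All differentials on the $E_{2}$-page therefore vanish for degree reasons, so the spectral sequence degenerates there.

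Reading off the abutment in low degree, the $E_{2}^{0,0}$-entry recovers the sheaf axiom $0 \to G(Y) \to G(U) \oplus G(V) \to G(U \cap V)$, while $H^{1}(Y, G) = E_{2}^{1,0} = G(U \cap V)/\mathrm{image}(G(U) \oplus G(V))$ supplies the final surjection. Splicing these together yields the desired four-term exact sequence. I do not anticipate any substantial obstacle; this is standard cohomological bookkeeping, and the author's one-line proof almost certainly invokes Mayer-Vietoris directly or a two-element-cover \v Cech argument.
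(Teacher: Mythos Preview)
Your argument is correct: the \v{C}ech-to-derived spectral sequence for the two-element cover degenerates as you describe, and reading off the $n=0,1$ abutments yields exactly the claimed four-term sequence. The paper's own proof is the single word ``Obvious,'' so your approach is a faithful unpacking of the standard Mayer--Vietoris reasoning the author is implicitly invoking.
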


\begin{proof}
	Obvious.
\end{proof}

Now the first map in \eqref{0009} is described as follows.
We work over $F[\varepsilon'] \cong F[x] / (x^{2 p})$.
It is an fppf cover of $F[\varepsilon]$
via the $F$-algebra map $\varepsilon \mapsto \varepsilon'^{p}$.
In the situation of Lemma \ref{0019} (so $Y = U \cup V$ is an affine open cover),
let $\beta$ be an element of $\Gamma(U \cap V, I^{e_{Y} + 1, f_{Z}}_{f_{Y}, f_{Z}})^{(p)}$.
Then $\beta \tensor \varepsilon$ is an element of
	\[
			\Gamma(U \cap V, I^{e_{Y} + 1, f_{Z}}_{f_{Y}, f_{Z}} \tensor_{F} F[\varepsilon])^{(p)}
		\subset
			\Gamma(U \cap V, I^{e_{Y} + 1, f_{Z}}_{f_{Y}, f_{Z}} \tensor_{F} F[\varepsilon'])^{(p)}.
	\]
Take any element $\alpha$ of
$\Gamma(U \cap V, I^{e_{Y} + 1, f_{Z}}_{f_{Y}, f_{Z}} \tensor_{F} F[\varepsilon'])$
that maps to $\beta \tensor \varepsilon$ via $\id \tensor \Frob_{F}$
(for example, $\alpha = \beta \tensor \varepsilon'$).
Any lift of $1 + \alpha$ as a section of
	$
			1
		+		I^{1, f_{Z}}_{f_{Y}, f_{Z}}
			\tensor_{W_{2}(F)}
				W_{2}(F[\varepsilon'])
	$
over $U \cap V$ defines an element $\gamma$ of
	\[
			\alg{H}^{1} \bigl(
					1
				+
					I^{1, f_{Z}}_{f_{Y}, f_{Z}}
				\tensor_{W_{2}(F)}
					W_{2}
			\bigr)(F[\varepsilon])
		\subset
			\alg{H}^{1} \bigl(
					1
				+
					I^{1, f_{Z}}_{f_{Y}, f_{Z}}
				\tensor_{W_{2}(F)}
					W_{2}
			\bigr)(F[\varepsilon'])
	\]
that maps to zero via $\varepsilon \mapsto 0$.
Hence $\gamma$ is an element of the Lie algebra of
	$
		\alg{H}^{1} \bigl(
				1
			+
				I^{1, f_{Z}}_{f_{Y}, f_{Z}}
			\tensor_{W_{2}(F)}
				W_{2}
		\bigr)
	$.
Now the map in question assigns $\gamma$ to $\beta$.

The upshot is:
\begin{Prop} \label{0013}
	We have a canonical isomorphism
		\[
				\Lie
				\alg{H}^{1} \bigl(
						1
					+
						I^{1, f_{Z}}_{f_{Y}, f_{Z}}
					\tensor_{W_{2}(F)}
						W_{2}
				\bigr)
			\cong
					H^{1}(Y, I^{e_{Y} + 1, f_{Z}}_{f_{Y}, f_{Z}})^{(p)}
				\oplus
					H^{1}(Y, I^{1, f_{Z}}_{e_{Y} + 1, f_{Z}})
		\]
	with the maps described as above.
\end{Prop}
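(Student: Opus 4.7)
The plan is to assemble the direct-sum decomposition from the two constructions sketched in the paragraphs immediately preceding the statement. For the first step, apply the exact functor $\Lie$ to the short exact sequence of unipotent algebraic $F$-group schemes
$0 \to H^{1}(Y, I^{e_{Y}+1, f_{Z}}_{f_{Y}, f_{Z}})_{(p)} \tensor_{F} \Ga \to \alg{H}^{1}(1 + I^{1,f_Z}_{f_Y,f_Z} \tensor_{W_2(F)} W_2) \to \alg{H}^{1}(1 + I^{1,f_Z}_{e_Y+1,f_Z} \tensor_F \Ga) \to 0$
derived above, producing \eqref{0009}; the outer terms compute as claimed because $\Lie(V \tensor_F \Ga) \cong V$ for any $F$-vector space $V$, and similarly $\Lie \alg{H}^1(1 + I \tensor_F \Ga) \cong H^1(Y, I)$ for a sheaf $I$ on $Y_{\et}$ killed by $p$ (using the identification from the preliminaries).

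The second step constructs the canonical splitting. The Lie algebra of an $F$-group sheaf $G$ is $\ker(G(F[\varepsilon]) \to G(F))$, and $W_2(F[\varepsilon])$ decomposes as $W_2(F) \oplus F \cdot (\varepsilon, 0) \oplus F \cdot (0, \varepsilon)$ as $W_2(F)$-modules, with the middle summand annihilated by $p$. The natural map $H^1(Y, 1 + I^{1,f_Z}_{f_Y,f_Z} \tensor_{W_2(F)} W_2(F[\varepsilon])) \to \alg{H}^1(\cdots)(F[\varepsilon])$ therefore picks out a canonical $F$-linear map $H^1(Y, I^{1,f_Z}_{e_Y+1,f_Z}) \to \Lie \alg{H}^1(\cdots)$ from the $F \cdot (\varepsilon, 0)$ contribution. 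Verifying that its composition with the surjection in \eqref{0009} is the identity is a direct check on sections and fppf-sheafification.

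For the third step, the other summand is extracted by the Čech construction indicated just before the statement. With $Y = U \cup V$ an affine open cover, represent a class $\beta \in H^1(Y, I^{e_Y+1,f_Z}_{f_Y,f_Z})_{(p)}$ by a 1-cocycle on $U \cap V$ using Lemma \ref{0019}; write $\beta$ as $\id \tensor \Frob$ of some $\alpha$ valued in $I^{e_Y+1,f_Z}_{f_Y,f_Z} \tensor_F F[\varepsilon']$ (with $\varepsilon'^{2p} = 0$); and lift $1 + \alpha$ to $W_2(F[\varepsilon'])$-coefficients to produce the desired element $\gamma \in \Lie \alg{H}^1(\cdots)$.

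The main obstacle will be checking that this Čech-theoretic assignment is well-defined (independent of the chosen lift) and coincides with the first map of \eqref{0009} arising as $\Lie$ applied to the injection from the $H^{1}(Y, I^{e_Y+1,f_Z}_{f_Y,f_Z})_{(p)} \tensor_F \Ga$ term. This reduces to a bookkeeping computation combining the decomposition of $W_2(F[\varepsilon'])$, the appearance of $\Frob$ from the Witt-vector multiplication law on the second coordinate, and Lemma \ref{0007} applied to the overlap sections on $U \cap V$. Once this compatibility is verified, the two explicit maps together exhibit the asserted canonical direct-sum isomorphism.
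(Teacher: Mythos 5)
Your proposal is correct and takes essentially the same route as the paper, where the proposition is stated as the upshot of precisely this construction: taking Lie algebras of the extension of $\alg{H}^{1}\bigl(1 + I^{1, f_{Z}}_{e_{Y}+1, f_{Z}} \tensor_{F} \Ga\bigr)$ by $H^{1}(Y, I^{e_{Y}+1, f_{Z}}_{f_{Y}, f_{Z}})_{(p)} \tensor_{F} \Ga$ to obtain \eqref{0009}, splitting it via $W_{2}(F[\varepsilon]) \cong W_{2}(F) \oplus F(\varepsilon,0) \oplus F(0,\varepsilon)$, and describing the first map through the \v{C}ech construction with $F[\varepsilon']$ from Lemma \ref{0019}. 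The only point worth noting (equally implicit in the paper) is that exactness of $\Lie$ on this sequence holds because the kernel is a smooth vector group, not because $\Lie$ is exact in general.
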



\section{Lie algebras of deformation cohomology II}
\label{0037}

Now we treat the group
$H^{1} \bigl( \mathfrak{X}, (1 + I_{Y} I_{Z}^{f_{Z} / p}) / (1 + I_{Y}^{f_{Y} / p} I_{Z}^{f_{Z} / p}) \bigr)$
in Proposition \ref{0016}.
Our algebraic group structure is different from the obvious candidate
in that certain infinitesimals are killed.

Define an fppf sheaf
	\begin{equation} \label{0011}
		\alg{H}^{1} \left(
			\frac{
				1 + I^{1, f_{Z} / p}_{f_{Y} / p, f_{Z} / p} \tensor_{F} \Ga
			}{
				1 + I^{1, f_{Z}}_{f_{Y} / p, f_{Z}} \tensor_{F} \alpha_{p}
			}
		\right)
	\end{equation}
on $F$ to be the fppf sheafification of the presheaf
	\[
			R
		\mapsto
		H^{1} \left(
			Y,\,
			\frac{
				1 + I^{1, f_{Z} / p}_{f_{Y} / p, f_{Z} / p} \tensor_{F} R
			}{
				1 + I^{1, f_{Z}}_{f_{Y} / p, f_{Z}} \tensor_{F} \alpha_{p}(R)
			}
		\right).
	\]
Using Proposition \ref{0054} \eqref{0055}
with $c = 1$ and $m = 0$,
we have $\Gamma(Y, I^{1, f_{Z} / p}_{f_{Y} / p, f_{Z} / p}) = 0$.
Also, the cokernel of the inclusion
$I^{1, f_{Z} / p}_{f_{Y} / p, f_{Z} / p} \into I^{1, f_{Z}}_{f_{Y} / p, f_{Z}}$
is a skyscraper sheaf and has trivial $H^{1}$.
Hence we have an exact sequence
	\[
			0
		\to
			\alg{H}^{1} \bigl(
				1 + I^{1, f_{Z} / p}_{f_{Y} / p, f_{Z} / p} \tensor_{F} \alpha_{p}
			\bigr)
		\to
			\alg{H}^{1}(
				1 + I^{1, f_{Z} / p}_{f_{Y} / p, f_{Z} / p} \tensor_{F} \Ga
			)
		\to
			\alg{H}^{1} \left(
				\frac{
					1 + I^{1, f_{Z} / p}_{f_{Y} / p, f_{Z} / p} \tensor_{F} \Ga
				}{
					1 + I^{1, f_{Z}}_{f_{Y} / p, f_{Z}} \tensor_{F} \alpha_{p}
				}
			\right)
		\to
			0.
	\]
Since the first two terms are unipotent algebraic group schemes over $F$,
so is the third term.
For any $F$-algebra $R$,
we have an exact sequence
	\[
			0
		\to
			I^{1, f_{Z} / p}_{f_{Y} / p, f_{Z} / p} \tensor_{F} \alpha_{p}(R)
		\to
			I^{1, f_{Z} / p}_{f_{Y} / p, f_{Z} / p} \tensor_{F} R
		\stackrel{\id \tensor \Frob_{F}}{\to}
			(I^{1, f_{Z} / p}_{f_{Y} / p, f_{Z} / p})^{(p)} \tensor_{F} R^{p}
		\to
			0
	\]
over $Y_{\et}$,
where $(I^{1, f_{Z} / p}_{f_{Y} / p, f_{Z} / p})^{(p)}$ is
the sheaf $I^{1, f_{Z} / p}_{f_{Y} / p, f_{Z} / p}$
with $F$-action given by $a \cdot v = a^{1 / p} v$
and $R^{p}$ is the set of $p$-th powers in $R$.
Therefore \eqref{0011} is isomorphic to
	\[
		\alg{H}^{1} \bigl(
			1 + (I^{1, f_{Z} / p}_{f_{Y} / p, f_{Z} / p})^{(p)} \tensor_{F} \Ga
		\bigr).
	\]
Its Lie algebra is
	\[
			H^{1}(Y, (I^{1, f_{Z} / p}_{f_{Y} / p, f_{Z} / p})^{(p)})
		\cong
			H^{1}(Y, I^{1, f_{Z} / p}_{f_{Y} / p, f_{Z} / p})^{(p)}.
	\]
Hence by taking the Lie algebras, we have
	\begin{equation} \label{0012}
			\Lie
			\alg{H}^{1} \left(
				\frac{
					1 + I^{1, f_{Z} / p}_{f_{Y} / p, f_{Z} / p} \tensor_{F} \Ga
				}{
					1 + I^{1, f_{Z}}_{f_{Y} / p, f_{Z}} \tensor_{F} \alpha_{p}
				}
			\right)
		\cong
			H^{1}(Y, I^{1, f_{Z} / p}_{f_{Y} / p, f_{Z} / p})^{(p)}.
	\end{equation}

This isomorphism is described as follows.
In the situation of Lemma \ref{0019},
let $\beta$ be an element of
	\[
			\Gamma(U \cap V, I^{1, f_{Z} / p}_{f_{Y} / p, f_{Z} / p})^{(p)}
		\cong
			\Gamma \Bigl(
				U \cap V,
				(I^{1, f_{Z} / p}_{f_{Y} / p, f_{Z} / p})^{(p)}
			\Bigr).
	\]
Then $\beta \tensor \varepsilon$ is an element of
	\[
			\Gamma \Bigl(
				U \cap V,
					(I^{1, f_{Z} / p}_{f_{Y} / p, f_{Z} / p})^{(p)}
				\tensor_{F}
					F[\varepsilon]
			\Bigr)
		\subset
			\Gamma \Bigl(
				U \cap V,
					(I^{1, f_{Z} / p}_{f_{Y} / p, f_{Z} / p})^{(p)}
				\tensor_{F}
					F[\varepsilon']
			\Bigr).
	\]
Take any element $\alpha$ of
$\Gamma(U \cap V, I^{1, f_{Z} / p}_{f_{Y} / p, f_{Z} / p} \tensor_{F} F[\varepsilon'])$
that maps to $\beta \tensor \varepsilon$ via $\id \tensor \Frob_{F}$
(for example, $\alpha = \beta \tensor \varepsilon'$).
The element $1 + \alpha$ defines an element $\gamma$ of
	\[
			\alg{H}^{1} \left(
				\frac{
					1 + I^{1, f_{Z} / p}_{f_{Y} / p, f_{Z} / p} \tensor_{F} \Ga
				}{
					1 + I^{1, f_{Z}}_{f_{Y} / p, f_{Z}} \tensor_{F} \alpha_{p}
				}
			\right)(F[\varepsilon])
		\subset
			\alg{H}^{1} \left(
				\frac{
					1 + I^{1, f_{Z} / p}_{f_{Y} / p, f_{Z} / p} \tensor_{F} \Ga
				}{
					1 + I^{1, f_{Z}}_{f_{Y} / p, f_{Z}} \tensor_{F} \alpha_{p}
				}
			\right)(F[\varepsilon'])
	\]
that maps to zero via $\varepsilon \mapsto 0$.
Hence $\gamma$ is an element of the Lie algebra of \eqref{0011}.
Now the isomorphism \eqref{0012} assigns $\gamma$ to $\beta$.

We compare the two algebraic group schemes thus obtained.

\begin{Prop}
	For any $F$-algebra $R$,
	the $p$-th power map (endomorphism) on the sheaf
	$1 + I^{1, f_{Z}}_{f_{Y}, f_{Z}} \tensor_{W_{2}(F)} W_{2}(R)$
	on $Y_{\et}$ factors through the quotient
	$1 + I^{1, f_{Z} / p}_{f_{Y} / p, f_{Z} / p} \tensor_{F} \Ga'(R)$
	of $1 + I^{1, f_{Z}}_{f_{Y}, f_{Z}} \tensor_{W_{2}(F)} W_{2}(R)$,
	defining a morphism
		\begin{equation} \label{0062}
				p
			\colon
				1 + I^{1, f_{Z} / p}_{f_{Y} / p, f_{Z} / p} \tensor_{F} \Ga'(R)
			\to
				1 + I^{1, f_{Z}}_{f_{Y}, f_{Z}} \tensor_{W_{2}(F)} W_{2}(R).
		\end{equation}
	
	We have a commutative diagram
		\[
			\begin{CD}
					1 + I^{1, f_{Z}}_{f_{Y} / p, f_{Z}} \tensor_{F} \alpha_{p}'(R)
				@>>>
					1 + I^{1, f_{Z} / p}_{f_{Y} / p, f_{Z} / p} \tensor_{F} \Ga'(R)
				\\
				@VVV @VV p V
				\\
					I^{e_{Y} + 1, f_{Z}}_{f_{Y}, f_{Z}} \tensor_{F} \Ga'(R)
				@>>>
					1 + I^{1, f_{Z}}_{f_{Y}, f_{Z}} \tensor_{W_{2}(F)} W_{2}(R)
			\end{CD}
		\]
	of sheaves on $Y_{\et}$,
	where the horizontal morphisms are the natural ones and
	the left vertical morphism sends a section $1 + \alpha$ to $(1 + \alpha)^{p} - 1$.
	
	Moreover, the image of the left vertical morphism is contained in the subsheaf
	$I^{e_{Y} + 1, e_{Z} + f_{Z}}_{f_{Y}, e_{Z} + f_{Z}} \tensor_{F} \alpha_{p}'(R)$.
\end{Prop}

\begin{proof}
	For the well-definedness of \eqref{0062},
	it is enough to see that the $p$-th power map kills the images of the sheaves
	$1 + I^{f_{Y} / p, f_{Z} / p}_{f_{Y}, f_{Z}} \tensor_{W_{2}(F)} W_{2}(R)$
	and
	$1 + I^{1, f_{Z}}_{f_{Y}, f_{Z}} \tensor_{W_{2}(F)} p (W_{2}(R))$.
	For the rest,
	the only thing to note is that $\alpha^{p}$ is zero
	and $p \alpha^{i}$ for $1 \le i \le p - 1$ is a section of
	$I^{e_{Y} + 1, e_{Z} + f_{Z}}_{f_{Y}, e_{Z} + f_{Z}} \tensor_{F} \alpha_{p}'(R)$.
\end{proof}

Therefore we obtain a well-defined morphism
	\begin{equation} \label{0015}
			\alg{H}^{1} \left(
				\frac{
					1 + I^{1, f_{Z} / p}_{f_{Y} / p, f_{Z} / p} \tensor_{F} \Ga
				}{
					1 + I^{1, f_{Z}}_{f_{Y} / p, f_{Z}} \tensor_{F} \alpha_{p}
				}
			\right)
		\stackrel{p}{\to}
			\alg{H}^{1} \bigl(
					1
				+
					I^{1, f_{Z}}_{f_{Y}, f_{Z}}
				\tensor_{W_{2}(F)}
					W_{2}
			\bigr)
	\end{equation}
of unipotent algebraic group schemes over $F$.
By Proposition \ref{0013} and Equation \eqref{0012},
this induces an $F$-linear map
	\begin{equation} \label{0014}
			H^{1}(Y, I^{1, f_{Z} / p}_{f_{Y} / p, f_{Z} / p})^{(p)}
		\to
				H^{1}(Y, I^{e_{Y} + 1, f_{Z}}_{f_{Y}, f_{Z}})^{(p)}
			\oplus
				H^{1}(Y, I^{1, f_{Z}}_{e_{Y} + 1, f_{Z}})
	\end{equation}
on the Lie algebras.

On the other hand, the multiplication by $p$ and the $p$-th power map
induce an isomorphism and a morphism
	\[
			p
		\colon
			I^{1, f_{Z} / p}_{f_{Y} / p, f_{Z} / p}
		\isomto
			I^{e_{Y} + 1, f_{Z}}_{f_{Y}, f_{Z}}
		\quad \text{and} \quad
			\Frob
		\colon
			I^{1, f_{Z} / p}_{f_{Y} / p, f_{Z} / p}
		\to
			I^{1, f_{Z}}_{e_{Y} + 1, f_{Z}},
	\]
respectively, over $Y_{\et}$.
The maps induced on $H^{1}$ are also denoted by $p$ and $\Frob$.

The following key result is false
if we do not factor out by $1 + I^{1, f_{Z}}_{f_{Y} / p, f_{Z}} \tensor_{F} \alpha_{p}$
in the definition \eqref{0011}.

\begin{Prop}
	The map \eqref{0014} is given by $(p, \Frob)$.
	It is injective.
\end{Prop}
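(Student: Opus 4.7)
My plan is to compute the map \eqref{0014} at the level of cocycles and then deduce injectivity from a sheaf-level calculation.

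First, I would represent a class in the source Lie algebra $H^{1}(Y, I^{1, f_{Z}/p}_{f_{Y}/p, f_{Z}/p})_{(p)}$ by a cocycle $\alpha \in I^{1, f_{Z}/p}_{f_{Y}/p, f_{Z}/p}(U \cap V) \tensor_{F} F[\varepsilon']$ with zero reduction, using an affine open cover $Y = U \cup V$ as in Lemma \ref{0019}. Applying the morphism \eqref{0015} at this level amounts to expanding $(1+\alpha)^{p} - 1 = p\alpha + \sum_{k=2}^{p-1} \binom{p}{k}\alpha^{k} + \alpha^{p}$.

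Next, I would observe that the middle terms vanish automatically in the source. The sheaf \eqref{0011} is built from a tensor product with $\Ga' = W_{2}/pW_{2}$, on which multiplication by $p$ is zero; since each $\binom{p}{k}$ for $2 \le k \le p-1$ is divisible by $p$, every middle term $\binom{p}{k}\alpha^{k}$ is annihilated. This is precisely what the quotient by $1 + I^{1, f_{Z}}_{f_{Y}/p, f_{Z}} \tensor \alpha_{p}$ in \eqref{0011} accomplishes: the junk middle terms live in this infinitesimal subsheaf, and killing them is what makes the Lie-algebra computation transparent. Only $p\alpha + \alpha^{p}$ remains, and I would match these to the two summands of the target Lie algebra identified in Proposition \ref{0013}: the term $p\alpha \in I^{e_{Y}+1, f_{Z}}_{f_{Y}, f_{Z}} \tensor \Ga'$ enters via the horizontal embedding in sequence \eqref{0008} and accounts for the multiplication-by-$p$ contribution to $H^{1}(Y, I^{e_{Y}+1, f_{Z}}_{f_{Y}, f_{Z}})_{(p)}$, while $\alpha^{p}$, whose leading $\varepsilon'^{p}$-coefficient at the cocycle level is $\alpha_{1}^{p}$ by the freshman's dream, projects via $I^{1, f_{Z}} \onto I^{1, f_{Z}}_{e_{Y}+1, f_{Z}}$ to give the Frobenius contribution to $H^{1}(Y, I^{1, f_{Z}}_{e_{Y}+1, f_{Z}})$.

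For injectivity, I would prove that the first component by itself is already an isomorphism. Using $p\Order_{\mathfrak{X}} = I_{Y}^{e_{Y}} I_{Z}^{e_{Z}}$ as a Cartier divisor together with the identities $e_{Y} + f_{Y}/p = f_{Y}$ and $e_{Z} + f_{Z}/p = f_{Z}$, multiplication by $p$ induces an $\Order_{\mathfrak{X}}$-linear sheaf isomorphism
\[
p \colon I^{1, f_{Z}/p}_{f_{Y}/p, f_{Z}/p} \isomto I^{e_{Y}+1, f_{Z}}_{f_{Y}, f_{Z}},
\]
with surjectivity immediate from $p \cdot I_{Y} I_{Z}^{f_{Z}/p} = I_{Y}^{e_{Y}+1} I_{Z}^{f_{Z}}$ and injectivity because $p$ is a non-zerodivisor in $\Order_{\mathfrak{X}}$, so $p\alpha \in I_{Y}^{f_{Y}} I_{Z}^{f_{Z}}$ forces $\alpha \in I_{Y}^{f_{Y}/p} I_{Z}^{f_{Z}/p}$. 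Consequently, the induced map on $H^{1}$ is an isomorphism, and $(p, \Frob)$ is a fortiori injective.

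The main technical obstacle I expect is the careful cocycle-level identification of the two target summands: the splitting of \eqref{0009} and the $(p)$-twist on $H^{1}(Y, I^{e_{Y}+1, f_{Z}}_{f_{Y}, f_{Z}})_{(p)}$ both originate from the exact sequence $0 \to \alpha_{p} \to \Ga \to \Ga \to 0$ used to derive the Lie algebra description, so unwinding the action of \eqref{0015} through this sequence at the level of representing cocycles requires careful tracking of which summand each surviving term contributes to. Once that bookkeeping is in place, however, both the $(p, \Frob)$ identification and the injectivity follow from the computations above.
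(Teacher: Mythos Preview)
Your injectivity argument via the sheaf isomorphism $p \colon I^{1, f_{Z}/p}_{f_{Y}/p, f_{Z}/p} \isomto I^{e_{Y}+1, f_{Z}}_{f_{Y}, f_{Z}}$ is correct and matches the paper. The problem is in your identification of the map as $(p,\Frob)$, specifically in how you dispose of the middle terms.

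You argue that each $\binom{p}{k}\alpha^{k}$ for $2 \le k \le p-1$ vanishes because $p \mid \binom{p}{k}$ and ``multiplication by $p$ is zero'' on the $\Ga'$-coefficients. But $\binom{p}{1}=p$ is equally divisible by $p$, so by exactly the same reasoning $p\alpha$ would also vanish---yet you need $p\alpha$ to survive as the first component. You are implicitly treating the leading $p\alpha$ as multiplication by the section $p \in \Order_{\mathfrak{X}}$ (a sheaf-level operation that does \emph{not} vanish) while treating the other $\binom{p}{k}$ as multiplication by $p$ in the coefficient ring (which does vanish). You cannot have it both ways: interpreted consistently, all terms $\binom{p}{i}\alpha_{0}^{i}$ for $1 \le i \le p-1$ are nonzero sections of $I^{e_{Y}+1,f_{Z}}_{f_{Y},f_{Z}}$ tensored with the coefficient ring, and none of them is killed by the $\alpha_{p}$-quotient in the source.

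The paper's mechanism is different. One works with $\alpha_{0} \otimes (\varepsilon',0)$ in $W_{2}(F[\varepsilon'])$-coefficients, where $F[\varepsilon'] = F[x]/(x^{2p})$, and computes the two projections of the target Lie algebra separately. For the second component one projects to $I^{1,f_{Z}}_{e_{Y}+1,f_{Z}}$; all terms $\binom{p}{i}\alpha_{0}^{i}$ ($1 \le i \le p-1$) land in the subsheaf $I^{e_{Y}+1,f_{Z}}$ and die, leaving $\alpha_{0}^{p}$. For the first component one divides out by the lift $1+\alpha_{0}^{p}\otimes(\varepsilon'^{p},0)$ of that contribution, obtaining a section of $I^{e_{Y}+1,f_{Z}}_{f_{Y},f_{Z}} \otimes F[\varepsilon']$ that still involves \emph{all} the terms $\binom{p}{i}\alpha_{0}^{i}\otimes \varepsilon'^{i}$; only after applying the map $\id \otimes \Frob$ that realizes the isomorphism with $H^{1}(Y, I^{e_{Y}+1,f_{Z}}_{f_{Y},f_{Z}})_{(p)}$ do the terms with $i \ge 2$ die, because $\varepsilon'^{ip}=0$ in $F[\varepsilon']$. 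So the vanishing of the middle terms is a consequence of the Frobenius twist in the Lie-algebra identification (Proposition~\ref{0013} and the description following it), not of any $p$-divisibility in the source. Your closing paragraph correctly anticipates that ``unwinding the action of \eqref{0015} through this sequence'' is the crux; the point is that this unwinding, not the mechanism you proposed earlier, is what actually kills the middle terms.
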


\begin{proof}
	In the situation of Lemma \ref{0019},
	let $\beta$ be an element of
	$\Gamma(U \cap V, I^{1, f_{Z} / p}_{f_{Y} / p, f_{Z} / p})^{(p)}$.
	Then $\alpha = \beta \tensor \varepsilon'$ is a section of
	$I^{1, f_{Z} / p}_{f_{Y} / p, f_{Z} / p} \tensor_{F} F[\varepsilon']$
	over $U \cap V$ that maps to $\beta \tensor \varepsilon$ via $\id \tensor \Frob_{F}$.
	The element $(1 + \beta \tensor (\varepsilon', 0))^{p}$ gives a section of
		$
				1
			+		I^{1, f_{Z}}_{f_{Y}, f_{Z}}
				\tensor_{W_{2}(F)}
					W_{2}(F[\varepsilon'])
		$.
	Its image in
		$
				1
			+
				I^{1, f_{Z}}_{e_{Y} + 1, f_{Z}}
			\tensor_{F}
				\Ga
		$
	is $1 + \beta^{p} \tensor \varepsilon$.
	Thus the second component of \eqref{0014} is $\Frob$.
	We have
		\[
				\frac{
					(1 + \beta \tensor (\varepsilon', 0))^{p}
				}{
					1 + \beta^{p} \tensor (\varepsilon'^{p}, 0)
				}
			=
					1
				+
					\frac{
						\sum_{i = 1}^{p - 1}
							\binom{p}{i}
							\beta^{i} \tensor (\varepsilon'^{i}, 0)
					}{
						1 + \beta^{p} \tensor (\varepsilon'^{p}, 0)
					},
		\]
	which is a lift of the section
		\[
			\frac{
				\sum_{i = 1}^{p - 1}
					\binom{p}{i}
					\beta^{i} \tensor \varepsilon'^{i}
			}{
				1 + \beta^{p} \tensor \varepsilon'^{p}
			}
		\]
	of $I^{e_{Y} + 1, f_{Z}}_{f_{Y}, f_{Z}} \tensor_{F} F[\varepsilon']$.
	Its image by $\id \tensor \Frob_{F}$ is $p \beta \tensor \varepsilon$.
	Hence the first component of \eqref{0014} is $p$.
	This first component is an isomorphism.
	Hence \eqref{0014} is injective.
\end{proof}

Therefore the morphism \eqref{0015} has finite \'etale kernel.
Taking $F$-valued points, we know that the map
	\[
			H^{1}(Y, 1 + I^{1, f_{Z} / p}_{f_{Y} / p, f_{Z} / p})
		\stackrel{p}{\to}
			H^{1}(Y, 1 + I^{1, f_{Z}}_{f_{Y}, f_{Z}})
	\]
has finite kernel.
By Proposition \ref{0016},
this implies that
	$
		\Gamma \bigl(
			Y,
			U^{(1, f_{Z})} R^{1} \Psi \mathfrak{T}(1)
		\bigr)
	$
is finite.
This finishes the proof of Theorem \ref{0018} in the mixed characteristic case.


\end{document}